\newtheorem{theorem}{Theorem}[section]
\newtheorem{corollary}[theorem]{Corollary}
\newtheorem{lemma}[theorem]{Lemma}
\newtheorem{proposition}[theorem]{Proposition}
\theoremstyle{definition}
\newtheorem{definition}[theorem]{Definition}
\newtheorem{remark}[theorem]{Remark}
\numberwithin{equation}{section}
\begin{document}


\baselineskip=17pt


\title{Ramanujan expansions of arithmetic functions of several variables over $\mathbb{F}_{q}[T]$}

\author{Tianfang Qi\\
Department of Mathematics\\
South China University of Technology\\
Guangzhou 510640
 Guangdong, China\\
E-mail: 15914449424@163.com
\and
Su Hu\\
Department of Mathematics\\
South China University of Technology\\
Guangzhou 510640
 Guangdong, China\\
E-mail: mahusu@scut.edu.cn}

\date{}

\maketitle


\renewcommand{\thefootnote}{}

\footnote{2010 \emph{Mathematics Subject Classification}: Primary 11T55, 11T24; Secondary 11L05.}

\footnote{\emph{Key words and phrases}: Arithmetic function, Ramanujan sum, Polynomial ring, Finite fields, Zeta function.}

\renewcommand{\thefootnote}{\arabic{footnote}}
\setcounter{footnote}{0}


\begin{abstract}
Let $\mathbb{A}=\mathbb{F}_{q}[T]$ be the  polynomial ring over finite field $\mathbb{F}_{q}$, and $\mathbb{A}_{+}$ be the set of monic polynomials in $\mathbb{A}$. In this paper, we show that a large
class of arithmetic functions in multi-variables over $\mathbb{A}$ can be expanded through the polynomial Ramanujan sums and the unitary polynomial Ramanujan sums. These are analogues of classical results over $\mathbb{N}$ by Delange, Ushiroya and T\'{o}th.\end{abstract}

\section{Introduction}
Let $\mathbb{N}$ be the set of positive integers and $(k,q)$ be the greatest common divisor of $k$ and $q$. The Ramanujan sums were first introduced by Ramanujan as follows
\begin{equation}\label{Ramanujan}
  c_{q}(n)=\mathop\sum_{\substack{k~\textrm{mod}~q\\(k,q)=1}}e^{\frac{2\pi{i}kn}{q}}\quad(q,n\in\mathbb{N}).
\end{equation}
 They satisfy many nice properties (see \cite[Sec. 8.3]{Apostol}).
Basing on the results given by Wintner \cite{Wintner}, Delange \cite{Delange} proved the following theorem, which shows that the arithmetic functions of one variable can be expanded through the Ramanujan sums (\ref{Ramanujan}).
\begin{theorem}[Delange \cite{Delange}]\label{Theorem 1}
Let $f:\mathbb{N}\rightarrow\mathbb{C}$ be an arithmetic function. Assume that
 \begin{equation*}
 \mathop\sum_{n=1}^{\infty}2^{\omega(n)}\frac{\left|(\mu\ast{f})(n)\right|}
 {n}<\infty.
 \end{equation*}
 Then for every $n\in\mathbb{N}$ we have the absolutely convergent Ramanujan expansion
 \begin{equation*}
 f(n)=\mathop\sum_{q=1}^{\infty}a_{q}c_{q}(n),
 \end{equation*}
 where the coefficients $a_{q}$ are given by
 \begin{equation*}
 a_{q}=\mathop\sum_{m=1}^{\infty}
 \frac{(\mu\ast{f})(mq)}{mq}\quad(q\in\mathbb{N}).
 \end{equation*}
\end{theorem}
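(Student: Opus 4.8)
The plan is to reduce everything to the multiplicative structure of the Ramanujan sums together with the single divisor-sum identity
\[
\sum_{q\mid N}c_{q}(n)=\begin{cases} N & \text{if } N\mid n,\\ 0 & \text{otherwise,}\end{cases}
\]
which follows from H\"older's formula and the properties collected in \cite[Sec.~8.3]{Apostol}. Writing $g=\mu\ast f$, so that $f=\mathbf{1}\ast g$ and hence $f(n)=\sum_{d\mid n}g(d)$, the candidate coefficients are $a_{q}=\sum_{m\ge1}g(mq)/(mq)$. Substituting and grouping the terms according to $N=mq$ would give, purely formally,
\[
\sum_{q\ge1}a_{q}c_{q}(n)=\sum_{q\ge1}\sum_{m\ge1}\frac{g(mq)}{mq}\,c_{q}(n)=\sum_{N\ge1}\frac{g(N)}{N}\sum_{q\mid N}c_{q}(n)=\sum_{N\mid n}g(N)=f(n),
\]
so the entire content of the theorem is the justification that this rearrangement is legitimate, i.e.\ that the double series converges absolutely for each fixed $n$.

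The decisive step, and the one I expect to be the main obstacle, is a clean bound for $\sum_{q\mid N}\lvert c_{q}(n)\rvert$. Since $q\mapsto c_{q}(n)$ is multiplicative, the function $N\mapsto\sum_{q\mid N}\lvert c_{q}(n)\rvert$ is multiplicative as well, so it suffices to treat prime powers. Using $c_{p^{j}}(n)=\varphi(p^{j})$ when $p^{j}\mid n$, $c_{p^{j}}(n)=-p^{j-1}$ when $p^{j-1}\,\|\,n$ (that is, $p^{j-1}\mid n$ but $p^{j}\nmid n$), and $c_{p^{j}}(n)=0$ otherwise, together with $\sum_{j=0}^{k}\varphi(p^{j})=p^{k}$, a short computation will give $\sum_{j=0}^{a}\lvert c_{p^{j}}(n)\rvert\le 2\,p^{\min(a,\,v_{p}(n))}$. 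Multiplying over the primes dividing $N$ then yields the estimate
\[
\sum_{q\mid N}\lvert c_{q}(n)\rvert\le 2^{\omega(N)}(N,n),
\]
in which the factor $2^{\omega(N)}$ is exactly the weight appearing in Delange's hypothesis; recognizing that this weight is forced by the divisor structure of the Ramanujan sums is the crux of the argument.

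With this estimate in hand the rest is routine. For fixed $n$, bounding $(N,n)\le n$ gives
\[
\sum_{N\ge1}\frac{\lvert g(N)\rvert}{N}\sum_{q\mid N}\lvert c_{q}(n)\rvert\le n\sum_{N\ge1}2^{\omega(N)}\frac{\lvert g(N)\rvert}{N}<\infty
\]
by hypothesis, so the double series $\sum_{q}\sum_{m}\lvert g(mq)/(mq)\rvert\,\lvert c_{q}(n)\rvert$ converges. This simultaneously shows that each $a_{q}$ is well defined, that $\sum_{q}\lvert a_{q}\rvert\,\lvert c_{q}(n)\rvert<\infty$, and that the interchange of summation used above is permitted (Tonelli and Fubini for series). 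Applying the divisor-sum identity to collapse $\sum_{q\mid N}c_{q}(n)$ then completes the proof. The only genuine subtlety beyond bookkeeping is the prime-power bound in the second paragraph; everything else is a consequence of absolute convergence.
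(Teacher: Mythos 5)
Your proposal is correct, and it is essentially the approach taken in the paper: the paper quotes Delange's theorem without proof, but its proof of the polynomial analogue (Theorem \ref{thm:general}) follows exactly this route, using the divisor-sum identity (\ref{equality.1}) to collapse $\sum_{q\mid N}c_q(n)$ and the prime-power computation behind the bound $\sum_{q\mid N}\lvert c_q(n)\rvert\le 2^{\omega(N)}\lvert n\rvert$ (inequality (\ref{inequality 2})), which the paper itself attributes to Delange's method. Your identification of the $2^{\omega(N)}$ weight as arising from the estimate $\sum_{j=0}^{a}\lvert c_{p^j}(n)\rvert\le 2p^{\min(a,v_p(n))}$ matches the paper's case analysis verbatim.
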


In 2016, Ushiroya \cite{Ushiroya} extended Theorem \ref{Theorem 1} to arithmetic functions of two variables, and also get Ramanujan expansion of certain special functions.

We say that $d$ is a unitary divisor of $n$, denoted by $d||n$, if $d|n$, $(d,n/d)=1$.
Cohen \cite{Cohen2}  introduced the following unitary Ramanujan sums (also see T\'{o}th \cite[Sect 3.2]{T1})
\begin{equation}\label{unitary Ramanujan sums}
  c_{q}^{\ast}(n)=\mathop\sum_{\substack{k~\textrm{mod}~q\\(k,q)_{\ast}=1}}e^{\frac{2\pi{i}kn}{q}}\quad(q,n\in\mathbb{N}),
\end{equation}
where $(k,q)_{\ast}=\mathrm{max}\{d:d|k,~d||q\}$.

For any fixed $k\in\mathbb{N}$, if $f,g:\mathbb{N}^{k}\rightarrow\mathbb{C}$ are two arithmetic functions, then their Dirichlet convolution is
defined  by (see T\'{o}th \cite[16]{T1})
\begin{equation}\label{Dirichlet convolution 1}
  (f\ast{g})(n_{1},\cdots,n_{k})=\mathop\sum_{d_{1}|n_{1},\cdots,d_{k}|n_{k}}f(d_{1},\cdots,d_{k})g\left(\frac{n_{1}}{d_{1}},\cdots,\frac{n_{k}}{d_{k}}\right).
\end{equation}

Recently,  T\'{o}th \cite{T1} generalized the work of Delange and Ushiroya, and proved that the multi-variable arithmetic functions can be expanded through the Ramanujan sums (\ref{Ramanujan}) and the unitary Ramanujan sums (\ref{unitary Ramanujan sums}) introduced above. His result is as follows. 
\begin{theorem}[T\'{o}th {\cite[Theorem 2]{T1}}]\label{Thm,t1}
Let $f:\mathbb{N}^k\rightarrow\mathbb{C}$ be an arithmetic function $(k\in\mathbb{N})$. Assume that
 \begin{equation}
 \mathop\sum_{n_1,\cdots,n_k=1}^{\infty}2^{\omega(n_1)+\cdots+\omega(n_k)}\frac{\left|(\mu_{k}\ast{f})(n_1,\cdots,n_k)\right|}
 {{n_1}\cdots{n_k}}<\infty.
 \end{equation}
 Then, for every $n_1,\cdots,n_k\in\mathbb{N}$, we have
 \begin{align}\label{a}
 f(n_1,\cdots,n_k)=\mathop\sum_{q_1,\cdots,q_k=1}^{\infty}a_{q_1,\cdots,q_k}c_{q_{1}}(n_{1})
 \cdots{c_{q_{k}}(n_{k})},
 \end{align}
 \begin{align}\label{b}
 f(n_1,\cdots,n_k)=\mathop\sum_{q_1,\cdots,q_k=1}^{\infty}a_{q_1,\cdots,q_k}^{\ast}c_{q_{1}}^{\ast}(n_{1})
 \cdots{c_{q_{k}}^{\ast}(n_{k})},
 \end{align}
 where
 \begin{align*}
   &a_{q_1,\cdots,q_k}=\mathop\sum_{m_1,\cdots,m_k=1}^{\infty}
 \frac{(\mu_{k}\ast{f})(m_1q_1,\cdots,m_kq_k)}{{m_1q_1}\cdots{m_kq_k}},\\
 &a_{q_1,\cdots,q_k}^{\ast}=\mathop\sum_{\substack{m_1,\cdots,m_k=1\\(m_1,q_1)=1,\cdots,(m_k,q_k)=1}}^{\infty}
 \frac{(\mu_{k}\ast{f})(m_1q_1,\cdots,m_kq_k)}{{m_1q_1}\cdots{m_kq_k}},
 \end{align*}
 the series (\ref{a}) and (\ref{b}) being absolutely convergent.
 \end{theorem}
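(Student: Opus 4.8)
The plan is to reduce both expansions to the divisor-sum form of $f$ and then feed in the closed forms of the two Ramanujan sums. Put $h=\mu_k\ast f$. Since $\mathbf{1}_k\ast\mu_k=\varepsilon_k$ (the identity for the convolution \eqref{Dirichlet convolution 1}, where $\mathbf{1}_k\equiv1$, $\mu_k(n_1,\dots,n_k)=\mu(n_1)\cdots\mu(n_k)$, and $\varepsilon_k(\vec n)=[n_1=\cdots=n_k=1]$), we get $f=\mathbf{1}_k\ast h$, i.e.
\[
f(n_1,\dots,n_k)=\sum_{d_1\mid n_1,\dots,d_k\mid n_k}h(d_1,\dots,d_k).
\]
Into this I would substitute the elementary identities $c_q(n)=\sum_{d\mid(q,n)}d\,\mu(q/d)$ for \eqref{a} and $c_q^{\ast}(n)=\sum_{d\mid\mid q,\;d\mid n}d\,\mu^{\ast}(q/d)$, with $\mu^{\ast}(m)=(-1)^{\omega(m)}$, for \eqref{b}, applied in each of the $k$ slots.

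First I would verify both expansions formally, granting the rearrangement of the multiple series. For \eqref{a}, insert $a_{\vec q}=\sum_{\vec m}h(\vec m\vec q)/\prod_i m_iq_i$ and the product $\prod_i c_{q_i}(n_i)$; writing $q_i=d_ie_i$ with $d_i\mid(q_i,n_i)$ cancels the factor $\prod_i d_i$, and setting $l_i=m_ie_i$ the inner sums collapse through $\sum_{e_i\mid l_i}\mu(e_i)=[l_i=1]$, leaving $\sum_{\vec d\mid\vec n}h(\vec d)=f(\vec n)$. The unitary case \eqref{b} runs identically, and here the coprimality constraints $(m_i,q_i)=1$ built into $a^{\ast}_{\vec q}$ are exactly what is needed: after writing $q_i=d_ie_i$ with $d_i\mid\mid q_i$ and reindexing $r_i=m_ie_i$ with $(m_i,e_i)=1$, the surviving factor is the unitary sum $\sum_{e_i\mid\mid r_i}\mu^{\ast}(e_i)=[r_i=1]$, so again only $r_i=1$ contributes and the series telescopes to $f(\vec n)$.

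The real work, and the only place the hypothesis is used, is the justification of absolute convergence, which both legitimizes every rearrangement above and furnishes the well-definedness of the coefficients. After the substitution $N_i=m_iq_i$ everything factors over the $k$ variables, so it suffices to bound, for fixed $n$, the one-variable quantity $\Phi_n(N):=\sum_{q\mid N}|c_q(n)|$. The key estimate is $\Phi_n(N)\le 2^{\omega(N)}\prod_{p\mid N}p^{v_p(n)}\le 2^{\omega(N)}\,n$, which I would prove by noting that $q\mapsto c_q(n)$ is multiplicative, hence so is $\Phi_n$, and computing at each prime $\sum_{j\ge0}|c_{p^j}(n)|=2p^{v_p(n)}$ (using $c_{p^j}(n)=\phi(p^j)$ for $1\le j\le v_p(n)$, $c_{p^{\,v_p(n)+1}}(n)=-p^{v_p(n)}$, and $0$ beyond). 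Taking the product over $p\mid N$ then gives
\[
\sum_{\vec q}|a_{\vec q}|\prod_i|c_{q_i}(n_i)|
\le\sum_{\vec N}\frac{|h(\vec N)|}{N_1\cdots N_k}\prod_i\Phi_{n_i}(N_i)
\le(n_1\cdots n_k)\sum_{\vec N}2^{\omega(N_1)+\cdots+\omega(N_k)}\frac{|h(\vec N)|}{N_1\cdots N_k},
\]
which is finite by hypothesis. The unitary analogue is easier: since $a^{\ast}_{\vec q}$ sums only over $m_i$ with $(m_i,q_i)=1$, the reindexing produces $\sum_{q\mid\mid N}|c_q^{\ast}(n)|$, a sum over the $2^{\omega(N)}$ unitary divisors of $N$ each bounded by $\sigma(n)$, so $\sum_{q\mid\mid N}|c_q^{\ast}(n)|\le 2^{\omega(N)}\sigma(n)$ and the same hypothesis applies.

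I expect the crucial obstacle to be precisely the refined estimate on $\Phi_n(N)$. The naive bound $|c_q(n)|\le\sigma(n)$ together with $\tau(N)$ divisors only yields a factor $\tau(N)$, which is $\gg 2^{\omega(N)}$ and too weak to match the statement. It is the vanishing $c_{p^j}(n)=0$ for $j>v_p(n)+1$ that collapses the effective divisor count to the $2^{\omega(N)}$ appearing in the hypothesis, so carrying out this multiplicative prime-power computation correctly is the heart of the argument; everything else is bookkeeping with absolutely convergent rearrangements.
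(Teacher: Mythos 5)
Your proposal is correct and follows essentially the same route as the paper's argument for this result's polynomial analogue (Theorem \ref{thm:general}, which is the only version actually proved here, the integer statement being quoted from T\'oth): reduce to $f=\mathbf{1}_k\ast(\mu_k\ast f)$, pivot on the divisor-sum identities for $c_q$ and $c_q^{\ast}$, and secure absolute convergence via the multiplicative bound $\sum_{q\mid N}\left|c_q(n)\right|\le 2^{\omega(N)}n$ obtained from the prime-power values of $c_{p^j}(n)$, exactly as in the paper's inequalities (\ref{inequality* 1}) and (\ref{inequality 2}). The only cosmetic differences are that you verify the stated expansion collapses back to $f$ rather than deriving the coefficients from the identity $\sum_{d\| N}c_d^{\ast}(n)=N\,[N\mid n]$, and that your unitary convergence bound $2^{\omega(N)}\sigma(n)$ is slightly weaker than the paper's $2^{\omega(N)}n$ but equally sufficient.
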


Let $(n_1,\cdots,n_k)$ be the greatest common divisor of $n_1,\cdots,n_k\in\mathbb{N}$ and $\sigma(n)$ and $\tau(n)$ be the sum of divisors of $n$ and the number of divisors of $n$, respectively.
Identities concerning some special functions can be obtained by setting $f(n_1,\cdots,n_k)=g((n_1,\cdots,n_k))$ in Theorem \ref{Thm,t1} (see T\'{o}th {\cite[Theorem 3]{T1}}), for example,
\begin{equation}\label{sigma*,n}
\frac{\sigma{((n_1,\cdots,n_k))}}{(n_1,\cdots,n_k)}
 =\zeta(k+1)\mathop\sum_{q_1,\cdots,q_k=1}^{\infty}
 \frac{\phi_{k+1}(Q)c_{q_1}^{\ast}(n_1)\cdots{c}_{q_k}^{\ast}(n_k)}{Q^{2(k+1)}}~(k\geq{1}),
 \end{equation}
 \begin{equation}\label{tau*,n}
 \tau{((n_1,\cdots,n_k))}
 =\zeta(k)\mathop\sum_{q_1,\cdots,q_k=1}^{\infty}
 \frac{\phi_{k}(Q)c_{q_1}^{\ast}(n_1)\cdots{c}_{q_k}^{\ast}(n_k)}{Q^{2k}}~(k\geq{2}),
 \end{equation}
 where $\zeta(k)$ are the special values of Riemann zeta function $\zeta(s)$ at positive integers $k$
 (see \cite[Eqs. (30) and (31)]{T1}).

Let $p$ be a prime number, $\mathbb{F}_{q}$ be the finite field with $q=p^{n}$ elements, $\mathbb{A}=\mathbb{F}_{q}[T]$ be the  polynomial ring over $\mathbb{F}_{q}$, and $\mathbb{A}_{+}$ be the set of monic polynomials in $\mathbb{A}$.
 It is well-known that there is an analogue on the arithmetic objects between $\mathbb{Z}$ and  $\mathbb{A}$. In analogue with the classical Ramanujan sums (\ref{Ramanujan}), Carlitz~\cite{Carlitz} first introduced the concept of polynomial Ramanujan sums over $\mathbb{A}$, and it has been generalized by Cohen in \cite{Cohen1}.

 Here we just briefly introduce some definitions and the main results of this paper. The notations involved will be explained in more detail in Sec.\ref{Pre}.
\begin{definition} [Carlitz~ {\cite[4.1]{Carlitz}} or Zheng~{\cite[1.10]{Zheng}}]
 For any $G,H\in\mathbb{A}$, the polynomial Ramanujan sum modulo $H$, denoted by $\eta{(G,H)}$, is given by
\begin{equation}\label{polynomial Ramanujan sums}
\eta{(G,H)}=\mathop\sum_{\substack{D~\mathrm{mod}~H\\(D,H)=1}}E(G,H)(D),
\end{equation}
where $(D,H)$ is the greatest  common monic divisor of $D$ and $H$ .
\end{definition}
We now give the definition of the unitary polynomial Ramanujan sums over $\mathbb{A}$,  which is an analogue of (\ref{unitary Ramanujan sums}).
\begin{definition}
For any $G,H\in\mathbb{A}$, the unitary polynomial Ramanujan sums over $\mathbb{A}$, denoted by $\eta^\ast(G,H)$, is given by
 \begin{equation}\label{def.up}
\eta^\ast(G,H)=\mathop\sum_{\substack{D~\mathrm{mod}~H\\(D,H)_{\ast}=1}}E(G,H)(D).
\end{equation}
\end{definition}

In order to demonstrate that there is a similar Ramanujan expansions for the arithmetic functions of multi-variables in the polynomial case, it needs to extend
the definition of arithmetic functions in one variable over $\mathbb{A}$ by Zheng (see {\cite[Definition 2.1]{Zheng}})  to the multi-variables .
\begin{definition} \label{Arith.}
 We say $f:(\mathbb{A}-\{0\})^{k} \rightarrow \mathbb{C}$ is a $k~(k\in\mathbb{N})$ variables arithmetic function if it is nonzero and for any
 $U_{1},\cdots,U_{k}\in\mathbb{F}_{q}^{\ast}$, $M_{1}, \cdots, M_{k}\in \mathbb{A}$, $f(U_{1}M_{1},\cdots,U_{k}M_{k})=f(M_{1},\cdots,M_{k})$.
\end{definition}
From the above definition, a function from $\mathbb{A}^{k}$ to $\mathbb{C}$ can be written as a function from $(\mathbb{A}_{+})^{k}$ to $\mathbb{C}$. Thus it is only need to consider the functions defined on $(\mathbb{A}_{+})^{k}$.

The following definition of Dirichlet convolution in the polynomial case is an analogue of (\ref{Dirichlet convolution 1}) given by T\'{o}th \cite{T1}.
\begin{definition}
For any fixed $k\in\mathbb{N}$, let $f,g:(\mathbb{A}_{+})^{k}\rightarrow\mathbb{C}$ be two arithmetic functions and the Dirichlet convolution is
defined by the following formula (all polynomials involved are assumed to be monic)
\begin{equation}\label{Dirichlet convolution.po}
  (f\ast{g})(G_{1},\cdots,G_{k})=\mathop\sum_{D_{1}|G_{1},\cdots,D_{k}|G_{k}}f(D_{1},\cdots,D_{k})g\left(\frac{G_{1}}{D_{1}},\cdots,\frac{G_{k}}{D_{k}}\right).
\end{equation}
\end{definition}

The following is the main result of this paper, which shows that the arithmetic functions of multi-variables over $\mathbb{A} $ can be expanded through the polynomial Ramanujan sums (\ref{polynomial Ramanujan sums}) and the unitary polynomial Ramanujan sums (\ref{def.up}).
\begin{theorem}\label{thm:general}
Let $f:(\mathbb{A}_{+})^k\rightarrow\mathbb{C}$ be an arithmetic function with $k\in\mathbb{N}$. Assume that
 \begin{equation}\label{value.general}
 \mathop\sum_{G_1,\cdots,G_k\in\mathbb{A}_{+}}2^{\omega(G_1)+\cdots+\omega(G_k)}\frac{\left|(\mu_{k}\ast{f})(G_1,\cdots,G_k)\right|}
 {\left|{G_1}\right|\cdots\left|{G_k}\right|}<\infty.
 \end{equation}
 Then, for any $G_1,\cdots,G_k\in\mathbb{A}_{+}$, we have
 \begin{equation}\label{po.ge.ab}
 f(G_1,\cdots,G_k)=\mathop\sum_{H_1,\cdots,H_k\in\mathbb{A}_{+}}\mathcal{C}_{H_1,\cdots,H_k}\eta(G_1,H_1)
 \cdots\eta(G_k,H_k),
 \end{equation}
 and
\begin{equation}\label{po.ge.ab*}
 f(G_1,\cdots,G_k)=\mathop\sum_{H_1,\cdots,H_k\in\mathbb{A}_{+}}\mathcal{C}_{H_1,\cdots,H_k}^{\ast}\eta^{\ast}(G_1,H_1)
 \cdots\eta^{\ast}(G_k,H_k),
 \end{equation}
 where
 \begin{equation}\label{po.ge.cof}
 \begin{split}
 &\mathcal{C}_{H_1,\cdots,H_k}=\mathop\sum_{M_1,\cdots,M_k\in\mathbb{A}_{+}}
 \frac{(\mu_{k}\ast{f})(M_1H_1,\cdots,M_kH_k)}{\left|{M_1H_1}\right|\cdots\left|{M_kH_k}\right|},\\
 &\mathcal{C}_{H_1,\cdots,H_k}^{\ast}=\mathop\sum_{\substack{M_1,\cdots,M_k\in\mathbb{A}_{+}\\(M_1,H_1)=1,\cdots,(M_k,H_k)=1}}
 \frac{(\mu_{k}\ast{f})(M_1H_1,\cdots,M_kH_k)}{\left|{M_1H_1}\right|\cdots\left|{M_kH_k}\right|}.
 \end{split}
 \end{equation}
The Ramanujan expansions (\ref{po.ge.ab}), (\ref{po.ge.ab*}) are all absolutely convergent.
 \end{theorem}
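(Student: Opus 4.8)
The plan is to carry the Delange--T\'oth argument over to $\mathbb{A}$, taking advantage of the fact that the right-hand sides of (\ref{po.ge.ab}) and (\ref{po.ge.ab*}) factor as $k$-fold products. Once absolute convergence has been secured, the whole identity reduces to a one-coordinate ``collapse'' performed simultaneously in each variable, so the combinatorial content is essentially that of the single-variable case. Throughout I write $g:=\mu_{k}\ast f$, and I first note that M\"obius inversion over $(\mathbb{A}_{+})^{k}$ gives
\begin{equation*}
f(G_{1},\ldots,G_{k})=\mathop\sum_{D_{1}\mid G_{1},\ldots,D_{k}\mid G_{k}}g(D_{1},\ldots,D_{k}),
\end{equation*}
which is the identity the expansion must reproduce.

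The two algebraic ingredients I would isolate first are the divisor-sum representation of the polynomial Ramanujan sum,
\begin{equation*}
\eta(G,H)=\mathop\sum_{\substack{D\mid H\\ D\mid G}}|D|\,\mu(H/D),
\end{equation*}
the function-field analogue of $c_{q}(n)=\sum_{d\mid(q,n)}d\,\mu(q/d)$, together with the defining relation $\sum_{D\mid E}\mu(E/D)=[E=1]$ of the M\"obius function over $\mathbb{A}$. For the unitary statement I would use in their place the corresponding representation of $\eta^{\ast}(G,H)$ as a sum over unitary divisors and the unitary M\"obius relation.

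Assuming the rearrangements are legitimate, the identity (\ref{po.ge.ab}) is a purely formal computation. Substituting the definition (\ref{po.ge.cof}) of $\mathcal{C}_{H_{1},\ldots,H_{k}}$ and the formula for each $\eta(G_{i},H_{i})$ into the right-hand side of (\ref{po.ge.ab}), I work coordinatewise: in the $i$-th slot I write $H_{i}=D_{i}S_{i}$ with $D_{i}\mid(G_{i},H_{i})$, so that $\mu(H_{i}/D_{i})=\mu(S_{i})$ and $M_{i}H_{i}=D_{i}(M_{i}S_{i})$. Setting $T_{i}=M_{i}S_{i}$ and summing over the factorizations via $\sum_{S_{i}\mid T_{i}}\mu(S_{i})=[T_{i}=1]$ kills every term except $T_{i}=1$; the factor $|D_{1}\cdots D_{k}|$ contributed by the $k$ Ramanujan sums then cancels the surviving denominator, and what remains is exactly $\sum_{D_{1}\mid G_{1},\ldots,D_{k}\mid G_{k}}g(D_{1},\ldots,D_{k})=f(G_{1},\ldots,G_{k})$. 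The unitary identity (\ref{po.ge.ab*}) follows by the identical manipulation with the unitary ingredients; the coprimality constraints $(M_{i},H_{i})=1$ built into $\mathcal{C}^{\ast}_{H_{1},\ldots,H_{k}}$ are precisely what the unitary version of the collapse requires.

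The real work, and the step I expect to be the main obstacle, is justifying the interchange of the several infinite summations; this is exactly what hypothesis (\ref{value.general}) is engineered to supply, and the $2^{\omega}$ weights enter here rather than in the formal algebra above. The estimate I would establish is
\begin{equation*}
\mathop\sum_{H\mid N}|\eta(G,H)|\leq 2^{\omega(N)}\,|(G,N)|,
\end{equation*}
which follows from the explicit (H\"older-type) evaluation of $\eta(G,H)$ by a prime-by-prime computation: the M\"obius factor $\mu(H/(G,H))$ forces $H/(G,H)$ to be squarefree, and this is what both produces the weight $2^{\omega(N)}$ and keeps the remaining factor equal to $|(G,N)|$ rather than $|N|$. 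Inserting this bound into the absolute version of the double series and regrouping the summation variables as $N_{i}=M_{i}H_{i}$, while using that $|(G_{i},N_{i})|\leq|G_{i}|$ is a constant fixed by the point of evaluation, yields the majorant
\begin{equation*}
\mathop\sum_{N_{1},\ldots,N_{k}\in\mathbb{A}_{+}}\frac{|g(N_{1},\ldots,N_{k})|}{|N_{1}\cdots N_{k}|}\prod_{i=1}^{k}\Big(\mathop\sum_{H_{i}\mid N_{i}}|\eta(G_{i},H_{i})|\Big)\leq\Big(\prod_{i=1}^{k}|G_{i}|\Big)\mathop\sum_{N_{1},\ldots,N_{k}\in\mathbb{A}_{+}}2^{\omega(N_{1})+\cdots+\omega(N_{k})}\frac{|g(N_{1},\ldots,N_{k})|}{|N_{1}\cdots N_{k}|},
\end{equation*}
which is finite by (\ref{value.general}). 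Finiteness of this majorant simultaneously shows that each coefficient series in (\ref{po.ge.cof}) converges absolutely and, by Fubini's theorem, legitimizes every rearrangement performed above; the unitary expansion follows from the analogous estimate for $\eta^{\ast}$.
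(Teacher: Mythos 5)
Your proposal is correct and follows essentially the same route as the paper: both arguments rest on the orthogonality relation $\sum_{H\mid N}\eta(G,H)=|N|$ if $N\mid G$ and $0$ otherwise (and its unitary analogue over $H\parallel N$), the bound $\sum_{H\mid N}|\eta(G,H)|\leq 2^{\omega(N)}|G|$, and a Fubini rearrangement after regrouping by $N_i=M_iH_i$, which is exactly what hypothesis (\ref{value.general}) licenses. The only cosmetic difference is that you verify the identity by collapsing the right-hand side via $\sum_{S\mid T}\mu(S)=[T=1]$, whereas the paper expands $f=\mathbf{1}\ast(\mu_k\ast f)$ and quotes the orthogonality relation directly; your version in effect re-derives that relation inline.
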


 Let $(G_1,\cdots,G_k)$ and $[G_1,\cdots,G_k]$ be the greatest common divisor and the least common multiple of $G_1,\cdots,G_k\in\mathbb{A}$, respectively. Let $g$ be an arithmetic function from $\mathbb{A}_{+}$ to $\mathbb{C}$. Setting $f(G_1,\cdots,G_k)=g((G_1,\cdots,G_k))$ in Theorem \ref{thm:general}, we obtain the following result.
\begin{theorem}\label{thm:special}
 Let ${g:\mathbb{A}_{+}\rightarrow\mathbb{C}}$ be an arithmetic function, $k\in\mathbb{N}$. Assume that
 \begin{equation}\label{value.special}
 {\mathop\sum_{G\in\mathbb{A}_{+}}2^{k\omega{(G)}}\frac{\left|{(\mu\ast{g})(G)}\right|}{\left|{G}\right|^{k}}}<\infty.
 \end{equation}
 Then for any ${G_1,\cdots,G_k\in\mathbb{A}_{+}}$, there exists absolutely convergent series
 \begin{equation}\label{po.sp.ab}
 g((G_1,\cdots,G_k))=\mathop\sum_{H_1,\cdots,H_k\in\mathbb{A}_{+}}\mathcal{C}_{H_1,\cdots,H_k}\eta(G_1,H_1)
 \cdots\eta(G_k,H_k),
 \end{equation}
 and
 \begin{equation}\label{po.sp.ab*}
 g((G_1,\cdots,G_k))=\mathop\sum_{H_1,\cdots,H_k\in\mathbb{A}_{+}}\mathcal{C}_{H_1,\cdots,H_k}^{\ast}\eta^{\ast}(G_1,H_1)
 \cdots\eta^{\ast}(G_k,H_k),
 \end{equation}
 where
 \begin{equation}\label{po.sp.coef}
 \begin{split}
   &\mathcal{C}_{H_1,\cdots,H_k}=\frac{1}{\left|{Q}\right|^{k}}\mathop\sum_{M\in\mathbb{A}_{+}}
 \frac{(\mu\ast{g})(MQ)}{\left|{M}\right|^{k}},\\
   &\mathcal{C}_{H_1,\cdots,H_k}^{\ast}=\frac{1}{\left|{Q}\right|^{k}}\mathop\sum_{\substack{M\in\mathbb{A}_{+}\\(M,Q)=1}}
 \frac{(\mu\ast{g})(MQ)}{\left|{M}\right|^{k}},
 \end{split}
 \end{equation}
 with $Q:=[H_1,\cdots,H_k]$.
 \end{theorem}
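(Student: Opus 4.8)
The plan is to deduce Theorem \ref{thm:special} from Theorem \ref{thm:general} by computing the multivariate M\"{o}bius transform of $f(G_1,\dots,G_k)=g((G_1,\dots,G_k))$ and then collapsing every resulting $k$-fold sum onto a diagonal. First I would isolate the following transform identity as the central lemma. Write $h:=\mu\ast g$ in one variable, so that $g(N)=\sum_{D\mid N}h(D)$, and define $\tilde h$ on $(\mathbb{A}_{+})^{k}$ by $\tilde h(D_1,\dots,D_k)=h(D)$ when $D_1=\cdots=D_k=D$ and $\tilde h(D_1,\dots,D_k)=0$ otherwise. Letting $\mathbf{1}_k$ denote the constant function $1$, a direct expansion gives
\begin{equation*}
(\mathbf{1}_k\ast\tilde h)(G_1,\dots,G_k)=\mathop\sum_{D\mid G_1,\dots,D\mid G_k}h(D)=g((G_1,\dots,G_k))=f(G_1,\dots,G_k).
\end{equation*}
Since $\mu_k$ is the convolution inverse of $\mathbf{1}_k$, this yields $\mu_k\ast f=\tilde h$; in particular $(\mu_k\ast f)(N_1,\dots,N_k)$ vanishes unless $N_1=\cdots=N_k=N$, in which case it equals $(\mu\ast g)(N)$.

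With this in hand, the hypothesis check is immediate: by the lemma the sum in (\ref{value.general}) reduces to its diagonal $G_1=\cdots=G_k=G$, where $\omega(G_1)+\cdots+\omega(G_k)=k\omega(G)$ and $|G_1|\cdots|G_k|=|G|^{k}$, so it equals exactly the series in (\ref{value.special}) and is therefore finite. Hence Theorem \ref{thm:general} applies and produces the absolutely convergent expansions (\ref{po.sp.ab}) and (\ref{po.sp.ab*}) with the coefficients (\ref{po.ge.cof}); it remains only to simplify those coefficients to the form (\ref{po.sp.coef}).

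For the ordinary coefficient $\mathcal{C}_{H_1,\dots,H_k}$ I would substitute $\mu_k\ast f=\tilde h$ into (\ref{po.ge.cof}): only the diagonal terms $M_1H_1=\cdots=M_kH_k=:N$ survive, which forces $Q:=[H_1,\dots,H_k]$ to divide $N$ and determines $M_i=N/H_i$. Writing $N=MQ$ with $M\in\mathbb{A}_{+}$ collapses the $k$-fold sum to $\tfrac{1}{|Q|^{k}}\sum_{M}(\mu\ast g)(MQ)/|M|^{k}$, which is the first line of (\ref{po.sp.coef}).

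The unitary coefficient is where I expect the real work. The same diagonal collapse applies, but now each surviving term also carries the constraint $(M_i,H_i)=1$, which through $M_i=N/H_i$ becomes the unitary divisibility $H_i\,\|\,N$ for every $i$. The crux is to show that these $k$ per-coordinate conditions are captured by the single condition $(M,Q)=1$ (equivalently $Q\,\|\,N$) appearing in (\ref{po.sp.coef}). I would argue this multiplicatively, prime by prime: at each prime $P$ the conditions pin $v_P(N)$ against $v_P(H_i)$ whenever $P\mid H_i$, and one must verify these are jointly consistent with, and only with, $v_P(N)=v_P(Q)$ for every $P\mid Q$. This reconciliation of the coprimality constraints is the main obstacle, since the per-coordinate unitary conditions interact across coordinates in a way the ordinary divisibility conditions do not; once it is settled, the same reparametrization $N=MQ$ yields the second line of (\ref{po.sp.coef}).
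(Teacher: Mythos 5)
Your overall route is exactly the paper's: show that $\mu_k\ast f$ is supported on the diagonal with value $(\mu\ast g)(N)$ there, note that (\ref{value.general}) then collapses to (\ref{value.special}), invoke Theorem \ref{thm:general}, and reparametrize the resulting diagonal sums by $N=MQ$. Your diagonal lemma, the hypothesis transfer, and the computation of $\mathcal{C}_{H_1,\cdots,H_k}$ are all correct and coincide with what the paper does (the paper obtains the same diagonal identity by the M\"obius-inversion argument you sketch), so up to the unitary coefficient your proposal and the paper's proof are the same.

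The gap is precisely the step you flag and defer. After the diagonal collapse the unitary coefficient is
\[
\mathcal{C}^{\ast}_{H_1,\cdots,H_k}=\mathop\sum_{\substack{N\in\mathbb{A}_{+}\\ H_1\parallel N,\cdots,H_k\parallel N}}\frac{(\mu\ast g)(N)}{\left|N\right|^{k}},
\]
and you must identify this with $\sum_{Q\parallel N}(\mu\ast g)(N)/\left|N\right|^{k}$, i.e.\ with the second line of (\ref{po.sp.coef}). The paper passes between these two sums in a single unexplained line, and the prime-by-prime verification you propose does not close: only the inclusion $\{N: H_i\parallel N\ \text{for all } i\}\subseteq\{N: Q\parallel N\}$ holds in general. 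Indeed $H_i\parallel N$ forces $\nu_P(N)=\nu_P(H_i)$ for every $P\mid H_i$, so if some prime $P$ carries two distinct positive exponents among $\nu_P(H_1),\cdots,\nu_P(H_k)$, the left-hand set is empty while the right-hand set is not: for $k=2$, $H_1=P$, $H_2=P^2$, the polynomial $N=P^2$ satisfies $Q=P^2\parallel N$ but not $P\parallel N$. Thus the two sums agree exactly for ``compatible'' tuples (for each $P$ all nonzero $\nu_P(H_i)$ coincide); for incompatible tuples the derivation yields $\mathcal{C}^{\ast}_{H_1,\cdots,H_k}=0$, whereas (\ref{po.sp.coef}) is generally nonzero. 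To finish one must either restrict the sum in (\ref{po.sp.ab*}) to compatible tuples or prove that the incompatible terms contribute nothing when summed against $\eta^{\ast}(G_1,H_1)\cdots\eta^{\ast}(G_k,H_k)$; neither your proposal nor, for that matter, the paper supplies such an argument.
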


\begin{definition}[Rosen {\cite[p.15]{Rosen}}]\label{Divisor} For $G\in\mathbb{A}$, let  $\tau(G)$ be  the number of monic divisors of $G$ and $\sigma(G)=\sum_{D \mid G}\left|D\right|$, where the sum is taken over all monic divisors of $G$. \end{definition}

As in the classical case \cite{T1}, several identities concerning the arithmetic functions over $\Bbb{A}$ can also be derived from Theorem \ref{thm:special}, see corollaries in Sec. \ref{Main results}.
For example, we have
\begin{equation}
 \frac{\sigma{((G_1,\cdots,G_k))}}{\left|(G_1,\cdots,G_k)\right|}
 =\zeta_{\mathbb{A}}(k+1)\mathop\sum_{H_1,\cdots,H_k\in\mathbb{A}_{+}}
 \frac{\phi_{k+1}(Q)\eta^{\ast}(G_1,H_1)\cdots\eta^{\ast}(G_k,H_k)}{\left|Q\right|^{2(k+1)}},
 \end{equation}
 for any $k\geq{1}$, and
 \begin{equation}
 \tau{((G_1,\cdots,G_k))}
 =\zeta_{\mathbb{A}}(k)\mathop\sum_{H_1,\cdots,H_k\in\mathbb{A}_{+}}
 \frac{\phi_{k}(Q)\eta^{\ast}(G_1,H_1)\cdots\eta^{\ast}(G_k,H_k)}{\left|Q\right|^{2k}}~(k\geq{2}),
 \end{equation}
 where $$\zeta_{\mathbb{A}}(s)=\sum_{f\in\mathbb{A}_{+}}\frac{1}{|f|^{s}} $$
 is the zeta function of $\mathbb{A}=\mathbb{F}_{q}[T]$ (see \cite[p.11]{Rosen}).

It is worth to mention that although we mainly follow  the approach of T\'{o}th \cite{T1}, our proof will rely on the definitions of several new terminologies over
$\mathbb{A}$, such as the unitary polynomial Ramanujan sums, the arithmetic functions over unitary divisors and the convolution of multi--variable functions. In addition, recent results by Zheng \cite{Zheng}
on additive characters and polynomial Ramanujan sums on $\mathbb{A}$ are also very crucial to our study. To our purpose, we also need to prove some further properties on the (unitary) polynomial Ramanujan sums over $\Bbb{A}$, including their multiplicativities.

\section{Preliminaries}\label{Pre}

In what follows, $\mathbb{P}$ denotes the set of monic irreducible polynomials in $\mathbb{A}$. A divisor of $G\in\mathbb{A}$ always represents a monic divisor. For any $G\in\mathbb{A}$, $\left|G\right|=q^{\textrm{deg}G}$.

\subsection{Arithmetic functions of $k$ variables in the polynomial case}

The definition of Arithmetic functions of $k$ variables in the polynomial case has been given in Definition \ref{Arith.}.

An arithmetic function $f$ is called multiplicative if $f$ is nonzero, and
for any $G_{1},\cdots,G_{k},H_{1},\cdots,H_{k}\in\mathbb{A}_{+}$ with $(G_{1}\cdots{G_{k}},H_{1}\cdots{H_{k}})=1$,
we have $f(G_{1}H_{1},\cdots,G_{k}H_{k})=f(G_{1},\cdots,G_{k})f(H_{1},\cdots,H_{k})$.

It is easy to see that if $f$ is multiplicative, then its value is completely determined by $f(P^{e_{1}},\cdots,P^{e_{k}})$, where $P\in\mathbb{P}$, $e_{1},\cdots,e_{k}\geq{0}$.

The M\"{o}bius function $\mu$ on $\mathbb{A} $ is defined as follows.
\begin{definition}[Rosen {\cite[p.15]{Rosen}} or Zheng {\cite[p.872, Eq.(2.11)]{Zheng}}]\label{Mobius-de} For $G\in\mathbb{A}$, let $\mu(G)$ be 0 if $G$ is not square-free, and $(-1)^{t}$ if $G$ is a constant times a product of $t$ distinct monic irreducible polynomials.
\end{definition}

We also give definitions  for some other arithmetic functions on $\mathbb{A}$.

\noindent(\uppercase\expandafter{\romannumeral1}) Define $\sigma_{s}$ $(s\in\Bbb{R})$ by \begin{equation}\label{sigmas} \sigma_{s}(G)=\mathop\sum_{D|G}\left|{D}\right|^{s},\end{equation}
and it is easy to see that $\tau(G)=\sigma_{0}(G)$ (see Definition \ref{Divisor}).

\noindent(\uppercase\expandafter{\romannumeral2}) The Jordan totient function $\phi_{s}$ $(s\in\Bbb{R})$ is given by (see \cite[Eq. (2.20)]{Zheng}) \begin{equation}\label{phi} \phi_{s}{(G)}=\mathop\sum_{D|G}\mu{\left(\frac{G}{D}\right)}\left|{D}\right|^{s}=|G|^{s}\prod_{P|G}\left(1-\frac{1}{|P|^{s}}\right).\end{equation}

\noindent(\uppercase\expandafter{\romannumeral3}) Define $\beta_{s}$ $(s\in\Bbb{R})$ by
        \begin{equation}\label{beta} \beta_{s}(G)=\mathop\sum_{D|G}\left|{D}\right|^{s}\lambda{\left({\frac{G}{D}}\right)},\end{equation}
where \begin{equation}\label{lambda} \lambda(G)=(-1)^{\Omega(G)}~\textrm{and}~\Omega(G)=\sum_{P}\nu_{P}(G).\end{equation}

\noindent(\uppercase\expandafter{\romannumeral4}) Define $\psi_{s}$ $(s\in\Bbb{R})$ by
     \begin{equation}\label{psi} \psi_{s}(G)=|G|^{s}\prod_{P|G}\left(1+\frac{1}{|P|^{s}}\right).\end{equation}

\noindent(\uppercase\expandafter{\romannumeral5}) Define the arithmetic function  $\delta_{k}$ by
\begin{equation}\label{delta}
  \delta_{k}(G_{1},\cdots,G_{k})=\left\{
 \begin{array}{ll}
 1,&\textrm{if}~{G_{1},\cdots,G_{k}\in\mathbb{F}_{q}^{\ast}},\\
 0,&\textrm{otherwise}.
 \end{array}
 \right.
\end{equation}
It is easy to check that $\delta_{k}$ is the unity in the sense of the Dirichlet convolution (\ref{Dirichlet convolution.po}).

\noindent(\uppercase\expandafter{\romannumeral6}) Define the arithmetic function $\mu_{k}$ by
\begin{equation}\label{mu}
  \mu_{k}(G_{1},\cdots,G_{k})=\mu(G_{1})\cdots\mu(G_{k}),
\end{equation}
where $\mu$ is the M\"{o}bius function on $\mathbb{A}$. It is also easy to see that $\mu_{k}$ is the inverse of the constant $1$ function under (\ref{Dirichlet convolution.po}).

\subsection{On the definition of polynomial Ramanujan sums}

We now recall the notations appeared in the definition of the polynomial Ramanujan sums (\ref{polynomial Ramanujan sums}).

Assume that $H$ $(H\neq{0})$ is a fixed polynomial of degree $m$ in $\mathbb{A}$ and let $$A\equiv{a_{m-1}T^{m-1}+\dots+a_{1}T+a_{0}}~(\textrm{mod}~H),$$ then we have an additive function modulo $H$ on $\mathbb{A}$ defined by
 $$ t(A)=a_{m-1},~~\textrm{for}~\textrm{any}~A\in\mathbb{A}.$$
For any $A,B\in\mathbb{A} $, we have
$$ t(A+B)=t(A)+t(B),~ t(A)=t(B)~~ \textrm{if}~~A\equiv{B} ~(\textrm{mod}\;H),$$
in particular, $t(A)=0$ if $H|A$.

Then for any given $G\in\mathbb{A}$, let $t_{G}(A)=t(GA)$, we can easily check $t_{G}(A)$ is also an additive function modulo $H$.

Let $E(G,H)(A)=\lambda(t_{G}(A))$, where $\lambda(a)=e^{2\pi{i}\left(\frac{\textrm{tr}(a)}{p}\right)} (a\in\mathbb{F}_{q})$, $\textrm{tr}(a)$ is the trace map from $\mathbb{F}_{q}$ to $\mathbb{F}_{p}$, so $E(G,H)$ is an additive character modulo $H$ on $\mathbb{A} $, and it satisfies the orthogonal relation formula (see Zheng \cite[Lemma 2.3]{Zheng})
\begin{equation}\label{the orthogonal relation formula}
\mathop\sum_{G~\textrm{mod}~H}{E(G,H)(A)}=\left\{
 \begin{array}{ll}
\left|{H}\right|,&\mathrm{if}~{H|A},\\
0,&{\mathrm{otherwise}}.
 \end{array}
 \right.
 \end{equation}

From the above notations, we immediately reach the definition of the polynomial Ramanujan sums which has already been given in (\ref{polynomial Ramanujan sums}) above,
and we also have the following relevant formulas (see \cite[(1.12), (1.13), (1.14)]{Zheng} or \cite[(3.4), (3.5)]{Cohen1}),
\begin{equation}
  \eta(G_{1},H)=\eta(G_{2},H),\quad{\textrm{if}~G_{1}\equiv{G_{2}}~(\textrm{mod}~H)},
\end{equation}
\begin{equation}\label{mult.p}
  \eta(G,H_{1}H_{2})=\eta(G,H_{1})\eta(G,H_{2}),\quad{\textrm{if}~(H_{1},H_{2})=1},
\end{equation}
\begin{equation}\label{ide}
\eta{(G,H)}=\mathop\sum_{D|(G,H)}\left|{D}\right|\mu{\left(\frac{H}{D}\right)}.
\end{equation}

\subsection{Arithmetic functions defined by unitary divisors}

For $G\in\mathbb{A}$, $D$ $(D\in\mathbb{A}_{+})$ is a unitary divisor of $G$, denoted by $D||G$, if $D|G$ and $(D,G/D)=1$.
In the following definition, we extend the arithmetic functions defined by unitary divisors to the polynomial case.
\begin{definition}\label{Arithmetic}
For any $G\in\mathbb{A}$, let
\begin{equation*}
  \begin{split}
     \sigma^{\ast}{(G)}&=\mathop\sum_{D||G}\left|{D}\right|,~\tau^{\ast}(G)=\#\{D:D||G\},\\
     \phi^\ast(G)&=\#\{H:\mathrm{deg}H<\mathrm{deg}G,~(H,G)_{\ast}=1\},
   \end{split}
\end{equation*}
where $(H,G)_{\ast}=\mathop{\max}\limits_{\mathrm{deg}}\{D:D|H,D||G\}$, that is, $(H,G)_{\ast}$ is the element of the highest degree in $\{D:D|H,D||G\}$.
\end{definition}
They are analogous of $\sigma(G)$, $\tau(G)$ (see Definition \ref{Divisor}) and Euler's $\phi$ function $\phi(G)$ on $\mathbb{A}$, respectively.

We can easily see that the functions $\sigma^{\ast}, \tau^{\ast}, \phi^\ast$ are all multiplicative
and for any $P\in\mathbb{P}$ and $e\geq{1}$, $\sigma^{\ast}(P^{e})=\left|P\right|^{e}+1, \tau^{\ast}(P^{e})=2, \phi^\ast(P^{e})=\left|P\right|^{e}-1$.

\begin{definition}\label{Arithmetic}
For any $G\in\mathbb{A}$, let $\mu^{\ast}(G)=(-1)^{\omega(G)}$,
where $\omega(G)$ be the number of distinct monic irreducible divisors of $G$.
\end{definition}
 This is another analogue of the M\"{o}bius function (comparing with Definition \ref{Mobius-de} above).

 It's easy to see that it is multiplicative and it is also the inverse of the constant function $1$ under the unitary convolution of the functions $f$ and $g$ defined by
\begin{equation*}
  (f\times{g})(G)=\mathop\sum_{D||G}f(D)g\left(\frac{G}{D}\right),~~ \textrm{for~all}~G\in\mathbb{A}_{+}.
\end{equation*}

\subsection{Properties of (unitary) polynomial Ramanujan sums}

In this subsection, we prove several properties of the polynomial Ramanujan sums $\eta(G,H)$ and the unitary polynomial Ramanujan sums $\eta^\ast(G,H)$.
\begin{proposition}
For $G,H\in\mathbb{A}$,
\begin{equation}
\eta^\ast(G,H)=\mathop\sum_{D||(G,H)_{\ast}}\left|{D}\right|\mu^{\ast}{\left(\frac{H}{D}\right)}.
\end{equation}
\end{proposition}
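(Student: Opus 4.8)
The plan is to mirror the proof of the ordinary identity (\ref{ide}), namely $\eta(G,H)=\sum_{D\mid(G,H)}|D|\,\mu(H/D)$, but to replace the ordinary divisor-counting by the unitary structure, using that $\mu^{\ast}$ is the Dirichlet inverse of the constant function $1$ under the unitary convolution $\times$. The unitary Ramanujan sum $\eta^\ast(G,H)$ is defined in (\ref{def.up}) as the character sum over residues $D\bmod H$ with $(D,H)_{\ast}=1$, so the first task is to re-express this summation condition. I would group the residues $D\bmod H$ according to the value $E=(D,H)_{\ast}$, the unitary-gcd of $D$ and $H$: as $D$ ranges over all residues mod $H$, the quantity $(D,H)_{\ast}$ runs over the unitary divisors $E\,\|\,H$, and the condition $(D,H)_{\ast}=1$ selects exactly those $D$ coprime to $H$ in the unitary sense.

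The key step is then an inclusion–exclusion / Möbius-inversion over the lattice of unitary divisors of $H$. Writing $S_E=\sum_{D\bmod H,\ E\mid D}E(G,H)(D)$ for the full character sum over a coset, one has $S_E=\sum_{E\,\|\,(G,H)_{\ast},\ F\|H,\ E\mid F}\dots$; more cleanly, I would use the orthogonality relation (\ref{the orthogonal relation formula}) to evaluate the complete character sums. Since $E(G,H)$ is an additive character modulo $H$, summing $E(G,H)(D)$ over a suitable sub-progression $\{D:\,(H/F)\mid D\}$ collapses, by (\ref{the orthogonal relation formula}), to $|F|$ precisely when $F$ divides $(G,H)$ in the appropriate sense, and to $0$ otherwise. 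Superimposing the $\mu^\ast$-weights coming from the unitary Möbius inversion then yields the weight $\mu^{\ast}(H/D)$ attached to each $D\,\|\,(G,H)_{\ast}$, leaving the factor $|D|$ from the orthogonality evaluation. This is the exact unitary analogue of how (\ref{ide}) is derived from (\ref{the orthogonal relation formula}).

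The main obstacle I anticipate is bookkeeping the two distinct divisibility notions at once: in $(D,H)_{\ast}$ one requires $D\,\|\,H$ (a \emph{unitary} divisor of $H$) while the inner character sum is governed by ordinary divisibility through (\ref{the orthogonal relation formula}). Establishing that the complete character sum $\sum_{D\bmod H}E(G,H)(D)$ restricted to residues with a prescribed unitary-gcd $E$ equals a clean multiple of $|E|$ — and that the resulting coefficients reassemble into $\mu^{\ast}(H/D)$ — is the delicate point, since one must check compatibility of the unitary condition with reduction modulo $H$. I would verify this first on prime powers $H=P^{e}$, where both $\mu^\ast(P^e)=-1$ for $e\ge 1$ and the unitary divisors of $P^e$ are just $1$ and $P^e$, and then extend to general $H$ by the multiplicativity of $\eta^\ast$ in its second argument (the unitary analogue of (\ref{mult.p})), which I would establish alongside, since both sides of the claimed identity are multiplicative in $H$ over coprime factorizations.
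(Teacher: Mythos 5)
Your proposal is correct and follows essentially the same route as the paper, whose entire proof is the single sentence that the identity follows from (\ref{ide}) by comparing the definitions of $\eta(G,H)$ and $\eta^{\ast}(G,H)$. The details you sketch --- writing the indicator of $(D,H)_{\ast}=1$ as $\sum_{E\mid D,\,E\parallel H}\mu^{\ast}(E)$, evaluating the inner complete character sum via (\ref{the orthogonal relation formula}), or alternatively verifying the identity on prime powers and invoking the multiplicativity of both sides in $H$ --- are exactly what the paper leaves implicit, so there is no gap.
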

\begin{proof}
 According to (\ref{ide}), the identity follows by comparing the definition of $\eta(G,H)$ and $\eta^\ast(G,H)$.
\end{proof}

From (\ref{mult.p}), we see that $\eta(G,H)$ is multiplicative, but we also have a simple proof here.
\begin{proposition}
For any $G\in\mathbb{A}$, $\eta(G,H)$ and $\eta^\ast(G,H)$ are multiplicative as functions of $H\in\mathbb{A}$.
\end{proposition}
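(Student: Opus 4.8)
The plan is to derive both statements from the divisor-sum identities rather than from the character-sum definitions (\ref{polynomial Ramanujan sums}), (\ref{def.up}), since those identities already display $\eta(G,H)$ and $\eta^{\ast}(G,H)$ as convolutions in the variable $H$ with $G$ held fixed. For $\eta$ I would start from identity (\ref{ide}), $\eta(G,H)=\sum_{D|(G,H)}|D|\mu(H/D)$, and replace the constraint $D|(G,H)$ by the pair of conditions $D|H$ and $D|G$. Putting $g_{G}(D)=|D|$ when $D|G$ and $g_{G}(D)=0$ otherwise, this reads $\eta(G,H)=\sum_{D|H}g_{G}(D)\mu(H/D)=(g_{G}\ast\mu)(H)$, the Dirichlet convolution (\ref{Dirichlet convolution.po}) with $k=1$.

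The function $g_{G}$ is multiplicative: the norm $|D|=q^{\deg D}$ is completely multiplicative, and for coprime $D_{1},D_{2}$ one has $D_{1}D_{2}|G$ if and only if $D_{1}|G$ and $D_{2}|G$, so the divisibility indicator is multiplicative too. Since $\mu$ is multiplicative (Definition \ref{Mobius-de}) and the Dirichlet convolution of two multiplicative functions is again multiplicative—proved as in the classical case from the fact that a divisor of $H_{1}H_{2}$ with $(H_{1},H_{2})=1$ factors uniquely as a divisor of $H_{1}$ times a divisor of $H_{2}$—the map $H\mapsto\eta(G,H)$ is multiplicative.

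The same strategy works for $\eta^{\ast}$ once the unitary sum is disentangled, and this is the step I expect to be the main obstacle. From the preceding proposition, $\eta^{\ast}(G,H)=\sum_{D||(G,H)_{\ast}}|D|\mu^{\ast}(H/D)$, I would first show that the index set equals $\{D:D||H,\ D|G\}$. Writing $H=\prod_{P}P^{\nu_{P}(H)}$, the definition of $(G,H)_{\ast}$ gives $(G,H)_{\ast}=\prod_{P|H,\ P^{\nu_{P}(H)}|G}P^{\nu_{P}(H)}$, whose unitary divisors are exactly the products $\prod_{P\in S}P^{\nu_{P}(H)}$ over subsets $S$ of that prime set; a direct check shows each such $D$ satisfies $D||H$ and $D|G$, and conversely every $D$ with $D||H$ and $D|G$ arises this way. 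Hence $\eta^{\ast}(G,H)=\sum_{D||H}g_{G}(D)\mu^{\ast}(H/D)=(g_{G}\times\mu^{\ast})(H)$ is the unitary convolution of the multiplicative function $g_{G}$ with the multiplicative function $\mu^{\ast}$.

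Finally I would invoke that the unitary convolution also preserves multiplicativity—using that for $(H_{1},H_{2})=1$ every unitary divisor of $H_{1}H_{2}$ splits uniquely as a unitary divisor of $H_{1}$ times one of $H_{2}$—to conclude that $H\mapsto\eta^{\ast}(G,H)$ is multiplicative. If one prefers to avoid quoting the convolution lemma for $\eta^{\ast}$, the subset sum can be evaluated directly: parametrizing the unitary divisors by subsets $S$ factorizes $\eta^{\ast}(G,H)$ into $\prod_{P|H}$ of a local factor equal to $|P^{\nu_{P}(H)}|-1$ when $P^{\nu_{P}(H)}|G$ and $-1$ otherwise, whence multiplicativity in $H$ is immediate.
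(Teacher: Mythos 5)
Your proof is correct, but it takes a genuinely different route from the paper's. The paper argues directly from the character-sum definitions (\ref{polynomial Ramanujan sums}) and (\ref{def.up}): for $(H_{1},H_{2})=1$ the Chinese remainder theorem splits the sum over reduced residues $D$ modulo $H_{1}H_{2}$ into a product of two character sums, giving $\eta(G,H_{1}H_{2})=\eta(G,H_{1})\eta(G,H_{2})$, and the unitary case is handled by the same splitting once one notes that the condition $(D,H_{1}H_{2})_{\ast}=1$ decomposes componentwise. You instead pass to the arithmetic representations, writing $\eta(G,\cdot)=g_{G}\ast\mu$ via (\ref{ide}) and $\eta^{\ast}(G,\cdot)=g_{G}\times\mu^{\ast}$ via the preceding proposition, and then invoke that Dirichlet and unitary convolution preserve multiplicativity. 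The trade-off is clear: your route needs the index-set identification $\{D: D||(G,H)_{\ast}\}=\{D: D||H,\ D|G\}$, which you correctly verify prime by prime, and it leans on the divisor-sum formula for $\eta^{\ast}$ that the paper only sketches; the paper's route needs nothing beyond CRT (and in effect just reproves (\ref{mult.p})). A genuine benefit of your method is that the local values $\eta^{\ast}(G,P^{e})=|P|^{e}-1$ or $-1$ fall out of the subset parametrization for free, whereas the paper must recompute them separately in the proof of the next proposition; your argument is also insensitive to the analytic details of the additive character $E(G,H)$, using only the combinatorics of (unitary) divisors.
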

\begin{proof}
 For any $H_{1},H_{2}\in\mathbb{A}$ and $(H_{1},H_{2})=1$,
since $E(G,H_{1}H_{2})$ is an additive character modulo $H_{1}H_{2}$, from the Chinese remainder theorem, we have
\begin{equation*}
\begin{split}
\eta{(G,H_{1}H_{2})}&=\mathop\sum_{\substack{D~\textrm{mod}~H_{1}H_{2}\\(D,H_{1}H_{2})=1}}E(G,H_{1}H_{2})(D)\\
&=\mathop\sum_{\substack{D~\textrm{mod}~H_{1}\\(D,H_{1})=1}}E(G,H_{1})(D)\mathop\sum_{\substack{D~\textrm{mod}~H_{2}\\(D,H_{2})=1}}E(G,H_{2})(D)\\
&=\eta(G,H_{1})\eta(G,H_{2}).
\end{split}
\end{equation*}
Therefore, $\eta(G,H)$ is multiplicative in $H$ for any fixed $G$.

Since $(D,H)=1$ implies $(D,H)_{\ast}=1$, the proof of the multiplicativity of $\eta^\ast(G,H)$ is along the same lines.
\end{proof}

Thus $\eta(G,H)$ and $\eta^\ast(G,H)$ are completely determined by its values on prime (monic irreducible polynomial) powers.
Using the multiplicativity of $\eta(G,H)$ and $\eta^\ast(G,H)$, we have the following proposition.
\begin{proposition}
For $G,H\in\mathbb{A}$,
\begin{equation}\label{equality*.1}
\mathop\sum_{D||H}\eta^\ast(G,D)=\left\{
 \begin{array}{ll}
 \left|{H}\right|,&\mathrm{if}~{H|G},\\
 0,&{\mathrm{otherwise}}.
 \end{array}
 \right.
\end{equation}
 \end{proposition}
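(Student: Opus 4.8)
The plan is to prove the identity by reducing to prime powers via multiplicativity in $H$, which is exactly what the sentence preceding the proposition anticipates. For fixed $G$, write $F(H) := \sum_{D||H}\eta^{\ast}(G,D)$; this is precisely the unitary convolution $(\eta^{\ast}(G,\cdot)\times 1)(H)$ of the function $H\mapsto\eta^{\ast}(G,H)$—which is multiplicative in $H$ by the preceding proposition—with the constant function $1$. Since the unitary convolution of two multiplicative functions is again multiplicative (the unitary divisors of a product $H_{1}H_{2}$ with $(H_{1},H_{2})=1$ are exactly the products $D_{1}D_{2}$ with $D_{i}||H_{i}$, and coprimality is preserved under this correspondence), $F$ is multiplicative. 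Denote the right-hand side by $R(H)$, equal to $|H|$ when $H|G$ and $0$ otherwise; it too is multiplicative in $H$, because for coprime $H_{1},H_{2}$ one has $H_{1}H_{2}|G$ if and only if $H_{1}|G$ and $H_{2}|G$, while $|H_{1}H_{2}|=|H_{1}||H_{2}|$, so $R(H_{1}H_{2})=R(H_{1})R(H_{2})$ in all cases. Hence it suffices to verify $F(P^{e})=R(P^{e})$ for every $P\in\mathbb{P}$ and $e\geq 1$, the case $H=1$ being immediate since $\eta^{\ast}(G,1)=1$ and $1|G$.

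First I would record that the only unitary divisors of $P^{e}$ are $1$ and $P^{e}$, since $(P^{a},P^{e-a})=1$ forces $a=0$ or $a=e$. Thus $F(P^{e})=\eta^{\ast}(G,1)+\eta^{\ast}(G,P^{e})$, and from the preceding proposition $\eta^{\ast}(G,1)=|1|\mu^{\ast}(1)=1$. It then remains to evaluate $\eta^{\ast}(G,P^{e})=\sum_{D||(G,P^{e})_{\ast}}|D|\,\mu^{\ast}(P^{e}/D)$, for which the value of $(G,P^{e})_{\ast}$ must be determined.

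The argument splits into two cases according to whether $P^{e}|G$. If $P^{e}|G$, then both unitary divisors $1$ and $P^{e}$ of $P^{e}$ divide $G$, so $(G,P^{e})_{\ast}=P^{e}$, giving $\eta^{\ast}(G,P^{e})=|P^{e}|\mu^{\ast}(1)+|1|\mu^{\ast}(P^{e})=|P^{e}|-1$; hence $F(P^{e})=1+(|P^{e}|-1)=|P^{e}|=R(P^{e})$. If $P^{e}\nmid G$, then the only unitary divisor of $P^{e}$ dividing $G$ is $1$, so $(G,P^{e})_{\ast}=1$, giving $\eta^{\ast}(G,P^{e})=\mu^{\ast}(P^{e})=-1$; hence $F(P^{e})=1-1=0=R(P^{e})$. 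By multiplicativity of both sides, this establishes the identity for all $H$.

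The main point requiring care is the reduction itself—confirming that $F$ genuinely inherits multiplicativity from the unitary convolution—together with correctly identifying $(G,P^{e})_{\ast}$ in the case $P^{e}\nmid G$: here one must invoke that the intermediate powers $P^{a}$ with $0<a<e$ are \emph{not} unitary divisors of $P^{e}$, so that $1$ really is the largest-degree unitary divisor of $P^{e}$ dividing $G$. Once these two structural observations are in place, the prime-power evaluation is entirely routine.
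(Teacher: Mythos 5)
Your proof is correct and takes essentially the same route as the paper's: both reduce to prime powers via the multiplicativity of $\eta^{\ast}(G,\cdot)$ in its second argument (so that $\sum_{D||H}\eta^{\ast}(G,D)$ factors over the prime powers exactly dividing $H$, each factor being $\eta^{\ast}(G,1)+\eta^{\ast}(G,P^{e})$), and both rest on the evaluation $\eta^{\ast}(G,P^{e})=|P|^{e}-1$ if $P^{e}\mid G$ and $-1$ otherwise. The only real difference is that you obtain these prime-power values from the identity $\eta^{\ast}(G,H)=\sum_{D||(G,H)_{\ast}}|D|\mu^{\ast}(H/D)$ established earlier in the subsection, whereas the paper computes them directly from the character-sum definition, using $\phi^{\ast}(P^{e})=|P|^{e}-1$ in the first case and the orthogonality relation in the second; both ingredients are available at this point, so either is fine.
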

\begin{proof}
We first prove the following identity
\begin{equation}
 \eta^\ast(G,P^{e})=\left\{
 \begin{array}{ll}
 \left|P\right|^{e}-1,&\mathrm{if}~{P^{e}|G},\\
 -1,&{\mathrm{otherwise}}.
 \end{array}
 \right.
\end{equation}
In fact, if $P^{e}|G$, then $E(G,P^{e})(D)=1$, and we have
\begin{equation*}
  \eta^\ast(G,P^{e})=\mathop\sum_{\substack{D~\textrm{mod}~P^{e}\\(D,P^{e})_{\ast}=1}}E(G,P^{e})(D)
  =\mathop\sum_{\substack{D~\textrm{mod}~P^{e}\\(D,P^{e})_{\ast}=1}}1
  =\phi^\ast(P^{e})
  =\left|P\right|^{e}-1.
\end{equation*}
If $P^{e}\nmid{G}$, then using the orthogonal relation formula (\ref{the orthogonal relation formula}), we have
\begin{equation*}
\begin{split}
  \eta^\ast(G,P^{e})
   &=\mathop\sum_{\substack{D~\textrm{mod}~P^{e}\\(D,P^{e})_{\ast}=1}}E(G,P^{e})(D)\\
   &=\mathop\sum_{D~\textrm{mod}~P^{e}}E(G,P^{e})(D)-\mathop\sum_{\substack{D~\textrm{mod}~P^{e}\\D\equiv{0}~(\textrm{mod}~P^{e})}}E(G,P^{e})(D)\\
   &=\mathop\sum_{D~\textrm{mod}~P^{e}}E(D,P^{e})(G)-1=-1.
\end{split}
\end{equation*}

Finally, let $H=P_{1}^{e_{1}}\cdots{P_{s}^{e_{s}}}$ be the prime decomposition of $H\in\mathbb{A}$. By (\ref{equality*.1}), we have
\begin{align*}
  \mathop\sum_{D||H}\eta^\ast{(G,D)}&=\mathop\sum_{D_{1}\cdots{D_{s}}||P_{1}^{e_{1}}\cdots{P_{s}^{e_{s}}}}\eta^\ast(G,D_{1}\cdots{D_{s}})\\
  &=\mathop\sum_{D_{1}||P_{1}^{e_{1}}}\cdots\mathop\sum_{D_{s}||P_{s}^{e_{s}}}\eta^\ast(G,D_{1}\cdots{D_{s}})\\
   &=\mathop\sum_{D_{1}||P_{1}^{e_{1}}}\eta^\ast(G,D_{1})\cdots\mathop\sum_{D_{s}||P_{s}^{e_{s}}}\eta^\ast(G,D_{s})\\
    &=\big(\eta^\ast(G,1)+\eta^\ast(G,P_{1}^{e_{1}})\big)\cdots\big(\eta^\ast(G,1)+\eta^\ast(G,P_{s}^{e_{s}})\big)\\
     &=\left\{
 \begin{array}{ll}
 \left|P_{1}\right|^{e_{1}},&\textrm{if}\, {P_{1}^{e_{1}}|G},\\
 0,&{\textrm{otherwise}}.
 \end{array}
 \right.\cdots\left\{
 \begin{array}{ll}
 \left|P_{s}\right|^{e_{s}},&\textrm{if}~{P_{s}^{e_{s}}|G},\\
 0,&{\textrm{otherwise}}.
 \end{array}
 \right.\\
     &=\left\{
 \begin{array}{ll}
 \left|H\right|,&\textrm{if}~{H|G},\\
 0,&{\textrm{otherwise}}.
 \end{array}
 \right.
\end{align*}
\end{proof}

The corresponding property for the polynomial Ramanujan sums $\eta(G,H)$ is as follows.
\begin{proposition}[Zheng {\cite[Corollary 4.3]{Zheng}}]
For $G,H\in\mathbb{A}$,
\begin{equation}\label{equality.1}
\mathop\sum_{D|H}\eta(G,D)=\left\{
 \begin{array}{ll}
 \left|{H}\right|,&\mathrm{if}~{H|G},\\
 0,&{\mathrm{otherwise}}.
 \end{array}
 \right.
 \end{equation}
 \end{proposition}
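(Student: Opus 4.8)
The plan is to give a direct computation based on the closed form (\ref{ide}) together with the elementary M\"obius identity $\sum_{D\mid N}\mu(D)=1$ if $N\in\mathbb{F}_q^{\ast}$ and $0$ otherwise; this avoids the prime-power case analysis entirely. First I would substitute (\ref{ide}) into the divisor sum and interchange the order of summation:
\begin{equation*}
\sum_{D\mid H}\eta(G,D)=\sum_{D\mid H}\sum_{E\mid (G,D)}|E|\,\mu\!\left(\frac{D}{E}\right)
=\sum_{E\mid (G,H)}|E|\sum_{\substack{D\mid H\\ E\mid D}}\mu\!\left(\frac{D}{E}\right).
\end{equation*}
The reindexing is justified because the joint conditions $D\mid H$ and $E\mid (G,D)$ (that is, $E\mid G$ and $E\mid D$) are equivalent to $E\mid (G,H)$ together with $E\mid D\mid H$: from $E\mid D\mid H$ and $E\mid G$ one gets $E\mid (G,H)$, and conversely any $D$ with $E\mid D\mid H$ satisfies $E\mid (G,D)$.

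Next I would evaluate the inner sum. Writing $D=ED'$, the condition $E\mid D\mid H$ becomes $D'\mid (H/E)$ and $\mu(D/E)=\mu(D')$, so by the M\"obius identity the inner sum equals $\sum_{D'\mid H/E}\mu(D')$, which is $1$ exactly when $H/E$ is a constant (i.e.\ $E=H$) and $0$ otherwise. Hence only the term $E=H$ can contribute, and
\begin{equation*}
\sum_{D\mid H}\eta(G,D)=\begin{cases}|H|,& H\mid (G,H),\\ 0,& \textrm{otherwise.}\end{cases}
\end{equation*}
Since $H\mid (G,H)$ is equivalent to $H\mid G$, this is precisely the asserted formula.

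Alternatively, one could mirror the proof of the unitary analogue (\ref{equality*.1}): first compute from (\ref{ide}) that $\eta(G,P^{e})$ equals $|P|^{e}-|P|^{e-1}$ when $P^{e}\mid G$, equals $-|P|^{e-1}$ when $\nu_{P}(G)=e-1$, and equals $0$ when $\nu_{P}(G)<e-1$; then sum over $0\le j\le e$ to obtain the telescoped value $|P|^{e}$ or $0$; and finally invoke the multiplicativity of $\eta(G,\cdot)$ to assemble the product over the prime factors of $H$. I expect the only delicate point in either route to be bookkeeping rather than a genuine obstacle: in the direct route, correctly tracking the divisibility conditions through the interchange of summation; in the multiplicative route, the three-way split for $\eta(G,P^{e})$ (in contrast to the two-way split in the unitary case) must be handled so that the boundary term $\nu_{P}(G)=e-1$ cancels the running telescoped sum. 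I would favour the direct route, since it isolates the single nontrivial ingredient, the M\"obius identity, in one clean step.
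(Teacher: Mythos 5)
Your direct argument is correct, and it is worth noting that the paper itself offers no proof of this proposition at all: it is quoted verbatim from Zheng's paper (Corollary 4.3 of \cite{Zheng}), so there is nothing internal to compare against line by line. The closest thing the paper does prove is the unitary analogue (\ref{equality*.1}), and there it follows your \emph{second} route: compute $\eta^{\ast}(G,P^{e})$ on prime powers (from the character definition rather than from the closed form) and assemble the result by multiplicativity; the same strategy, with the three-way case split (\ref{4.1}) and the telescoping you describe, is exactly what the paper uses in the subsequent proposition to bound $\sum_{D\mid H}\left|\eta(G,D)\right|$. Your preferred route --- substituting (\ref{ide}), interchanging the two divisor sums, and collapsing the inner sum with $\sum_{D'\mid N}\mu(D')=0$ unless $N$ is a unit --- is a genuinely different and arguably cleaner derivation: the reindexing ($D\mid H$ and $E\mid(G,D)$ if and only if $E\mid(G,H)$ and $E\mid D\mid H$) is justified exactly as you state, the surviving term forces $E=H$ and hence $H\mid G$, and no prime factorization or multiplicativity is needed. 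What the direct route buys is a one-step self-contained proof from the M\"obius-type identity (\ref{ide}); what the paper's prime-power route buys is that the local values $\eta(G,P^{e})$ are needed anyway for the absolute-value estimate (\ref{inequality 2}), so the case analysis is not wasted there. Both are sound; your write-up has no gap.
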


\begin{proposition}
For any $G,H\in\mathbb{A}$, we have
\begin{equation}\label{35}
  \mathop\sum_{D||H}\left|\eta^\ast(G,D)\right|=2^{\omega{\left(\frac{H}{(G,H)_{\ast}}\right)}}\left|{(G,H)_{\ast}}\right|,
\end{equation}
\begin{equation}\label{inequality* 1}
  \mathop\sum_{D||H}\left|\eta^\ast(G,D)\right|\leq{2^{\omega{(H)}}\left|G\right|},
\end{equation}
\begin{equation}\label{inequality 2}
  \mathop\sum_{D|H}\left|\eta(G,D)\right|\leq{2^{\omega{(H)}}\left|G\right|}.
\end{equation}
\end{proposition}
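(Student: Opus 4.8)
The plan is to reduce every statement to a computation at prime powers by exploiting multiplicativity in the modulus $H$. Since $\left|\cdot\right|$ is completely multiplicative and $\eta^{\ast}(G,H)$ is multiplicative in $H$, the function $H\mapsto\left|\eta^{\ast}(G,H)\right|$ is multiplicative. For any multiplicative $g$ the unitary-divisor sum factors as $\sum_{D||H}g(D)=\prod_{P^{e}||H}\bigl(1+g(P^{e})\bigr)$, because a unitary divisor of $H=P_{1}^{e_{1}}\cdots P_{s}^{e_{s}}$ is exactly the product of a subset of the full prime powers $P_{i}^{e_{i}}$. This turns the left-hand side of the first identity into a product of two-term factors.

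To prove the first (equality) statement I would substitute the prime-power values obtained just above, namely $\eta^{\ast}(G,P^{e})=\left|P\right|^{e}-1$ when $P^{e}\mid G$ and $\eta^{\ast}(G,P^{e})=-1$ otherwise. Hence $1+\left|\eta^{\ast}(G,P^{e})\right|$ equals $\left|P\right|^{e}$ when $P^{e}\mid G$ and equals $2$ when $P^{e}\nmid G$. Splitting $\prod_{P^{e}||H}\bigl(1+\left|\eta^{\ast}(G,P^{e})\right|\bigr)$ along these two cases, the first group multiplies to $\prod_{P^{e}||H,\,P^{e}\mid G}\left|P\right|^{e}=\left|(G,H)_{\ast}\right|$, since $(G,H)_{\ast}$ is exactly the product of those $P^{e}||H$ with $P^{e}\mid G$, while the second group multiplies to $2^{\omega(H/(G,H)_{\ast})}$; this yields the claimed equality.

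The second (unitary) inequality is then immediate from the first: since $(G,H)_{\ast}\mid G$ we have $\left|(G,H)_{\ast}\right|\leq\left|G\right|$, and since $H/(G,H)_{\ast}$ divides $H$ we have $\omega(H/(G,H)_{\ast})\leq\omega(H)$, so the right-hand side is bounded by $2^{\omega(H)}\left|G\right|$.

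The third inequality, for the ordinary sums $\eta(G,H)$, is the step I expect to demand the most care, because the local sum $\sum_{j=0}^{a}\left|\eta(G,P^{j})\right|$ now has several nonzero terms rather than two. First I would read off the prime-power values from (\ref{ide}): writing $v=\nu_{P}(G)$, one finds $\eta(G,P^{j})=\left|P\right|^{j}-\left|P\right|^{j-1}$ for $1\leq j\leq v$, $\eta(G,P^{j})=-\left|P\right|^{j-1}$ for $j=v+1$, and $\eta(G,P^{j})=0$ for $j>v+1$. Using the multiplicativity of $\left|\eta(G,\cdot)\right|$ together with the factorization $\sum_{D\mid H}\left|\eta(G,D)\right|=\prod_{P^{a}||H}\sum_{j=0}^{a}\left|\eta(G,P^{j})\right|$, a telescoping computation shows each local factor equals $\left|P\right|^{a}$ when $a\leq v$ and $2\left|P\right|^{v}$ when $a>v$, and in either case it is at most $2\left|P\right|^{\min(a,v)}=2\left|P\right|^{\nu_{P}((G,H))}$. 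Multiplying over all $P^{a}||H$ gives $\sum_{D\mid H}\left|\eta(G,D)\right|\leq 2^{\omega(H)}\left|(G,H)\right|\leq 2^{\omega(H)}\left|G\right|$. The only real subtlety is keeping the telescoping sum and the boundary term at $j=v+1$ straight across the two cases $a\leq v$ and $a>v$.
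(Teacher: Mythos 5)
Your proof is correct and follows essentially the same route as the paper: for the unitary case, multiplicativity of $\left|\eta^{\ast}(G,\cdot)\right|$ plus the prime-power values $\left|P\right|^{e}-1$ and $-1$ give the two-term local factors and hence (\ref{35}), with (\ref{inequality* 1}) as an immediate consequence; for (\ref{inequality 2}), the paper likewise computes the local sums from (\ref{ide}) to get $\left|P\right|^{a}$ when $a\leq\nu_{P}(G)$ and $2\left|P\right|^{\nu_{P}(G)}$ otherwise, then multiplies over $P\mid H$. The only cosmetic difference is that you bound the product by $2^{\omega(H)}\left|(G,H)\right|$ before relaxing to $2^{\omega(H)}\left|G\right|$, which is a marginally sharper intermediate statement than the paper's.
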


\begin{proof}
First, we consider the case of the unitary polynomial Ramanujan sums $\eta^\ast(G,H)$. Let $H=P_{1}^{e_{1}}\cdots{P_{s}^{e_{s}}}$ be the prime decomposition of $H\in\mathbb{A}$. Using its multiplicativity and identity (\ref{equality*.1}), we have
\begin{align*}
  \mathop\sum_{D||H}\left|\eta^\ast{(G,D)}\right|
  &=\mathop\sum_{D_{1}\cdots{D_{s}}||P_{1}^{e_{1}}\cdots{P_{s}^{e_{s}}}}
    \left|\eta^\ast(G,D_{1}\cdots{D_{s}})\right|\\
  &=\mathop\sum_{D_{1}||P_{1}^{e_{1}}}\cdots\mathop\sum_{D_{s}||P_{s}^{e_{s}}}\left|\eta^\ast(G,D_{1}\cdots{D_{s}})\right|\\
  &=\mathop\sum_{D_{1}||P_{1}^{e_{1}}}\left|\eta^\ast(G,D_{1})\right|\cdots\mathop\sum_{D_{s}||P_{s}^{e_{s}}}\left|\eta^\ast(G,D_{s})\right|\\
  &=\left(\left|\eta^\ast(G,1)\right|+\left|\eta^\ast(G,P_{1}^{e_{1}})\right|\right)\cdots
    \left(\left|\eta^\ast(G,1)\right|+\left|\eta^\ast(G,P_{s}^{e_{s}})\right|\right)\\
  &=\left(\left\{
 \begin{array}{ll}
 \left|P_{1}\right|^{e_{1}},&\textrm{if}~{P_{1}^{e_{1}}|G},\\
 2,&{\textrm{otherwise}},
 \end{array}
 \right.\right)\cdots\left(\left\{
 \begin{array}{ll}
 \left|P_{s}\right|^{e_{s}},&\textrm{if}~{P_{s}^{e_{s}}|G},\\
 2,&{\textrm{otherwise}},
 \end{array}
 \right.\right)\\
     &=2^{\omega{\left(\frac{H}{(G,H)_{\ast}}\right)}}\left|{(G,H)_{\ast}}\right|.
\end{align*}
It is (\ref{35}). From (\ref{35}), we immediately obtain (\ref{inequality* 1}).

Next, we consider the case of the polynomial Ramanujan sums $\eta(G,H)$ and the method presented here is analogous to that given by Delange \cite{Delange}.
Let
\begin{equation*}
\begin{split}
   \mathop\sum_{D|P^{e}}\left|{\eta(G,D)}\right|&=\mathop\sum_{i=0}^{e}\left|{\eta(G,P^{i})}\right|\\
  &=\left|{\eta(G,1)}\right|+\left|{\eta(G,P)}\right|+\dots+\left|{\eta(G,P^{e})}\right|.
\end{split}
\end{equation*}

If $P\nmid{G}$ and $s\geq{1}$, then we have
\begin{equation*}
  \eta(G,P^{s})=\mathop\sum_{D|(G,P^{s})}\left|D\right|\mu\left(\frac{P^{s}}{D}\right)
  =\mu(P^{s}),
\end{equation*}
so by the definition of $\mu$, we obtain
\begin{equation*}
  \mathop\sum_{D|P^{e}}\left|{\eta(G,D)}\right|=\left|{\eta(G,1)}\right|+\left|{\eta(G,P)}\right|
  =\left|\mu(1)\right|+\left|\mu(P)\right|=2.
\end{equation*}

If $P^{a}||G$ and $a\geq{1}$, then we have
\begin{equation}\label{4.1}
\begin{split}
 \eta(G,P^{e})&=\mathop\sum_{i=0}^{\textrm{min}\{a,e\}}\left|P^{i}\right|\mu(P^{e-i})\\
  &=\left\{
 \begin{array}{ll}
 \left|P\right|^{e}-\left|P\right|^{e-1},&\textrm{if}~{1\leq{e}\leq{a}},\\
 -\left|P\right|^{a},&\textrm{if}~{e=a+1},\\
 0,&{\textrm{otherwise}}.
 \end{array}
 \right.
\end{split}
\end{equation}

\noindent $\textrm{i})$ If $1\leq{e}\leq{a}$, then by (\ref{4.1}), we have
\begin{equation*}
  \mathop\sum_{D|P^{e}}\left|{\eta(G,D)}\right|=1+(\left|P\right|-1)+\dots+(\left|P\right|^{e}-\left|P\right|^{e-1})=\left|P\right|^{e}.
\end{equation*}

\noindent $\textrm{ii})$ If $e>a$, then by (\ref{4.1}), we have
\begin{align*}
  \mathop\sum_{D|P^{e}}\left|{\eta(G,D)}\right|
  &=\left|{\eta(G,1)}\right|+\left|{\eta(G,P)}\right|+\dots+\left|{\eta(G,P^{a})}\right|+\left|{\eta(G,P^{a+1})}\right|\\
  &=1+(\left|P\right|-1)+\dots+(\left|P\right|^{a}-\left|P\right|^{a-1})+\left|({-\left|P\right|^{a}})\right|\\
  &=2\left|P\right|^{a}.
\end{align*}

Combining $\textrm{i})$ and $\textrm{ii})$, we get
\begin{equation*}
  \mathop\sum_{D|P^{e}}\left|{\eta(G,D)}\right|=\left\{
 \begin{array}{ll}
 \left|P\right|^{e},&\textrm{if}~{1\leq{e}\leq{a}},\\
 2\left|P\right|^{a},&\textrm{if}~{e>a},
 \end{array}
 \right.
\end{equation*}
which implies that
\begin{equation*}
  \mathop\sum_{D|P^{e}}\left|{\eta(G,D)}\right|\leq{2\left|P\right|^{\nu_{P}(G)}}.
\end{equation*}
Since $\eta(G,H)$ is multiplicative, we have
\begin{equation*}
\begin{split}
  \mathop\sum_{D|H}\left|{\eta(G,D)}\right|&=\prod_{P|H}\mathop\sum_{D|P^{e}}\left|{\eta(G,D)}\right|\\
  &\leq{\left(\prod_{\substack{P|H\\P\nmid{G}}}2\right)}\left(\prod_{P|(H,G)}2\left|P\right|^{\nu_{P}(G)}\right)\\
   &\leq{2^{\omega(H)}}\prod_{P|(H,G)}\left|P\right|^{\nu_{P}(G)}\\
   &\leq{2^{\omega(H)}}\prod_{P|G}\left|P\right|^{\nu_{P}(G)}\\
  &\leq2^{\omega(H)}\left|G\right|,
\end{split}
\end{equation*}
which is the desired result.
\end{proof}

\section{Proofs of the Main results and their corollaries}\label{Main results}
In this section, we shall proof our main results (Theorems \ref{thm:general} and  \ref{thm:special}), and we will also show some of their corollaries. 

First, we give a proof for the general result (Theorem \ref{thm:general}).

\noindent\textbf{Proof of Theorem \ref{thm:general}}
First we prove (\ref{po.ge.ab*}) concerning the expansion through unitary Ramanujan sums $\eta^\ast(G,H)$.

From equality (\ref{equality*.1}), we have
\begin{align*}
  &f(G_1,\cdots,G_k)\\
 &=\mathop\sum_{D_1|G_1,\cdots,D_k|G_k}{(\mu_k\ast{f})(D_1,\cdots,D_k)}\\
 &=\mathop\sum_{D_1|G_1,\cdots,D_k|G_k}\frac{(\mu_k\ast{f})(D_1,\cdots,D_k)}
 {\left|{D_1}\right|\cdots\left|{D_k}\right|}\left|{D_1}\right|\cdots\left|{D_k}\right|\\
 &=\mathop\sum_{D_1|G_1,\cdots,D_k|G_k}\frac{(\mu_k\ast{f})(D_1,\cdots,D_k)}
 {\left|{D_1}\right|\cdots\left|{D_k}\right|}
 \mathop\sum_{H_1\parallel{D_1}}\eta^\ast(G_1,H_1)\cdots\mathop\sum_{H_k\parallel{D_k}}\eta^\ast(G_k,H_k)\\
 &=\mathop\sum_{H_1,\cdots,H_k\in\mathbb{A}_{+}}\eta^\ast(G_1,H_1)\cdots\eta^\ast(G_k,H_k)
 \mathop\sum_{\substack{D_1,\cdots,D_k\in\mathbb{A}_{+}\\H_1\parallel{D_1},\cdots,H_k\parallel{D_k}}}
 \frac{(\mu_k\ast{f})(D_1,\cdots,D_k)}
 {\left|{D_1}\right|\cdots\left|{D_k}\right|}\\
 &=\mathop\sum_{H_1,\cdots,H_k\in\mathbb{A}_{+}}\eta^\ast(G_1,H_1)\cdots\eta^\ast(G_k,H_k)\\
  &\quad\quad\times\mathop\sum_{\substack{M_1,\cdots,M_k\in\mathbb{A}_{+}\\(M_1,H_1)=1,\cdots,(M_k,H_k)=1}}
 \frac{(\mu_{k}\ast{f})(M_1H_1,\cdots,M_kH_k)}{\left|{M_1H_1}\right|\cdots\left|{M_kH_k}\right|}.
\end{align*}
Thus (\ref{po.ge.ab*}) is obtained if we denote
 \[
 \mathcal{C}^{\ast}_{H_1,\cdots,H_k}=\mathop\sum_{\substack{M_1,\cdots,M_k\in\mathbb{A}_{+}\\(M_1,H_1)=1,\cdots,(M_k,H_k)=1}}
 \frac{(\mu_{k}\ast{f})(M_1H_1,\cdots,M_kH_k)}{\left|{M_1H_1}\right|\cdots\left|{M_kH_k}\right|}.
 \]

 In the following, we prove that the series (\ref{po.ge.ab*}) is absolutely convergent. By  inequality (\ref{inequality* 1}), we have
 \begin{align*}
   &\left|f(G_1,\cdots,G_k)\right|\\
 &=\left|\mathop\sum_{H_1,\cdots,H_k\in\mathbb{A}_{+}}\mathcal{C}_{H_1,\cdots,H_k}^{\ast}\eta^{\ast}(G_1,H_1)
   \dots\eta^{\ast}(G_k,H_k)\right|\\
 &\leq\mathop\sum_{H_1,\cdots,H_k\in\mathbb{A}_{+}}\left|\mathcal{C}_{H_1,\cdots,H_k}^{\ast}\right|\left|\eta^{\ast}(G_1,H_1)\right|
   \cdots\left|\eta^{\ast}(G_k,H_k)\right|\\
 &\leq\mathop\sum_{H_1,\cdots,H_k\in\mathbb{A}_{+}}\mathop\sum_{\substack{M_1,\cdots,M_k\in\mathbb{A}_{+}\\(M_1,H_1)=1,\cdots,(M_k,H_k)=1}}
   \frac{\left|(\mu_{k}\ast{f})(M_1H_1,\cdots,M_kH_k)\right|}{\left|{M_1H_1}\right|\cdots\left|{M_kH_k}\right|}
   \left|\eta^{\ast}(G_1,H_1)\right|\cdots\left|\eta^{\ast}(G_k,H_k)\right|\\
 &=\mathop\sum_{T_1,\cdots,T_k\in\mathbb{A}_{+}}\frac{\left|(\mu_{k}\ast{f})(T_1,\dots,T_k)\right|}{\left|{T_1}\right|\cdots\left|{T_k}\right|}
   \mathop\sum_{\substack{M_1H_1=T_1\\(M_1,H_1)=1}}\left|\eta^{\ast}(G_1,H_1)\right|\cdots
   \mathop\sum_{\substack{M_kH_k=T_1\\(M_k,H_k)=1}}\left|\eta^{\ast}(G_k,H_k)\right|\\
 &=\mathop\sum_{T_1,\cdots,T_k\in\mathbb{A}_{+}}\frac{\left|(\mu_{k}\ast{f})(T_1,\cdots,T_k)\right|}{\left|{T_1}\right|\cdots\left|{T_k}\right|}
   \mathop\sum_{{H_1}||{T_1}}\left|\eta^{\ast}(G_1,H_1)\right|\cdots
   \mathop\sum_{{H_k}||{T_k}}\left|\eta^{\ast}(G_k,H_k)\right|\\
 &\leq\mathop\sum_{T_1,\cdots,T_k\in\mathbb{A}_{+}}\frac{\left|(\mu_{k}\ast{f})(T_1,\cdots,T_k)\right|}{\left|{T_1}\right|\cdots\left|{T_k}\right|}
   (2^{\omega(T_1)}\left|{G_1}\right|)\cdots(2^{\omega(T_k)}\left|{G_k}\right|)\\
 &=\left|{G_1}\right|\cdots\left|{G_k}\right|\mathop\sum_{T_1,\cdots,T_k\in\mathbb{A}_{+}}
   2^{\omega(T_1)+\cdots+\omega(T_k)}\frac{\left|(\mu_{k}\ast{f})(T_1,\cdots,T_k)\right|}
   {\left|{T_1}\right|\cdots\left|{T_k}\right|}<\infty,
 \end{align*}
which is the desired result.

 The rest part of the proof of  (\ref{po.ge.ab}) concerning the Ramanujan sums $\eta(G,H)$ is along the same lines as the above by using equality (\ref{equality.1}) and inequality (\ref{inequality 2}).
We completes the proof of Theorem \ref{thm:general}.

\begin{lemma}If $f$ is multiplicative, then (\ref{value.general}) equals to
 \begin{equation}\label{value.ge.1}
 \mathop\sum_{G_1,\cdots,G_k\in\mathbb{A}_{+}}\frac{\left|(\mu_{k}\ast{f})(G_1,\cdots,G_k)\right|}
 {\left|{G_1}\right|\cdots\left|{G_k}\right|}<\infty,
 \end{equation}
 and
 \begin{equation}\label{value.ge.2}
 \prod_{P\in\mathbb{P}}\mathop\sum_{\substack{e_1,\cdots,e_k=0\\e_1+\cdots+e_k\geq{0}}}^{\infty}\frac{\left|(\mu_{k}\ast{f})(P^{e_1},\cdots,P^{e_k})\right|}
 {\left|P\right|^{e_1+\cdots+e_k}}<\infty.
 \end{equation}
\end{lemma}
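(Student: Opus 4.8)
The plan is to reduce all three expressions to Euler products and compare them factor by factor. First I would record that if $f$ is multiplicative then so is $h:=\mu_{k}\ast f$: the function $\mu_{k}$ of \eqref{mu} is multiplicative because $\mu$ is, and the Dirichlet convolution \eqref{Dirichlet convolution.po} of two multiplicative $k$-variable functions is again multiplicative (the standard divisor-factorization argument carried over coordinatewise). Consequently $|h|$ is multiplicative, and since $2^{\omega(\cdot)}$ and $|\cdot|^{-1}$ are multiplicative in each variable, each summand occurring in \eqref{value.general} and in \eqref{value.ge.1} is a non-negative multiplicative function of the tuple $(G_{1},\dots,G_{k})$. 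For such a function the (possibly infinite) sum over all tuples factors as an Euler product over $P\in\mathbb{P}$ of the local contributions coming from the prime powers $P^{e_{1}},\dots,P^{e_{k}}$, and the sum is finite exactly when this product converges.

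Granting this, the assertion that \eqref{value.ge.1} equals \eqref{value.ge.2} is immediate: expression \eqref{value.ge.2} is precisely the Euler product attached to the non-negative multiplicative summand of \eqref{value.ge.1}, whose local factor at $P$ is $\sum_{e_{1},\dots,e_{k}\ge 0}\left|h(P^{e_{1}},\dots,P^{e_{k}})\right|/|P|^{e_{1}+\cdots+e_{k}}$. Thus the two are equal as elements of $[0,\infty]$, and in particular one is finite if and only if the other is.

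It remains to compare \eqref{value.general} with \eqref{value.ge.1}. Writing $L_{P}$ and $L_{P}'$ for their respective local factors at $P$ (the former carrying the weights $2^{\omega(\cdot)}$, the latter not), I would note that both begin with the term $e_{1}=\cdots=e_{k}=0$, which equals $h(1,\dots,1)=\mu(1)^{k}f(1,\dots,1)=1$. For the remaining terms, since $\omega(P^{e})=1$ when $e\ge 1$ and $\omega(P^{0})=0$, one has $1\le 2^{\omega(P^{e_{1}})+\cdots+\omega(P^{e_{k}})}\le 2^{k}$; hence $L_{P}'-1\le L_{P}-1\le 2^{k}(L_{P}'-1)$. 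Because a product of factors $\ge 1$ converges precisely when the series $\sum_{P}(L_{P}-1)$ converges, this sandwich forces the two Euler products to converge together, so \eqref{value.general}, \eqref{value.ge.1} and \eqref{value.ge.2} are simultaneously finite. The one point requiring genuine care is the justification that a multi-variable non-negative sum equals its Euler product with matching convergence; I expect this, rather than the elementary bound $1\le 2^{\omega}\le 2^{k}$, to be the main obstacle, and would handle it by grouping tuples according to their prime factorizations and invoking the absolute convergence of non-negative series.
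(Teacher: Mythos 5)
Your proof is correct, and it is in fact more careful than the paper's own argument at the one point where care is actually needed. The paper proves the easy implication the same way you do (the weight $2^{\omega(G_1)+\cdots+\omega(G_k)}\ge 1$ gives \eqref{value.general} $\Rightarrow$ \eqref{value.ge.1}), but for the converse it asserts that ``for any $G_1,\dots,G_k\in\mathbb{A}_+$, $0\le\omega(G_i)\le 1$'' and concludes that the weight is globally bounded by $2^k$ --- which is false, since $\omega(G)$ is unbounded on $\mathbb{A}_+$; it then invokes an ``Abel criterion'' without detail. The inequality $\omega\le 1$ is only valid on prime powers, i.e.\ inside a single Euler factor, and your proof is precisely the correct repair: factor all three quantities as Euler products of the local sums (legitimate by Tonelli for nonnegative multiplicative summands), observe that within each local factor the weight is squeezed between $1$ and $2^k$ so that $L_P'-1\le L_P-1\le 2^k(L_P'-1)$, and use the standard criterion that a product of factors $\ge 1$ converges iff $\sum_P(L_P-1)$ does. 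This also makes the equivalence of \eqref{value.ge.1} with \eqref{value.ge.2} transparent, which the paper dismisses as ``clear.'' The only cosmetic point: you should note explicitly that $f(1,\dots,1)=1$ follows from $f$ being multiplicative and nonzero, so that each local factor indeed has constant term $1$; you state this but it deserves the one-line justification.
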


\begin{proof}
It's clear that (\ref{value.ge.1}) equals to (\ref{value.ge.2}) by the multiplicativity of $f$, so we just need to prove (\ref{value.general}) equals to (\ref{value.ge.1}).

In fact, for any $G_1,\cdots,G_k\in\mathbb{A}_{+}$, $\omega(G_i)\geq 0$ $(1\leq i\leq k)$, we have
$$\frac{\left|(\mu_{k}\ast{f})(G_1,\cdots,G_k)\right|}{\left|{G_1}\right|\cdots\left|{G_k}\right|}
 \leq 2^{\omega(G_1)+\cdots+\omega(G_k)}\frac{\left|(\mu_{k}\ast{f})(G_1,\cdots,G_k)\right|}
 {\left|{G_1}\right|\cdots\left|{G_k}\right|}.$$
 It follows that (\ref{value.general}) implies (\ref{value.ge.1}).

 Conversly, for any $G_1,\cdots,G_k\in\mathbb{A}_{+}$, $0\leq\omega(G_i)\leq 1$ $(1\leq i\leq k)$, this implies $2^{\omega(G_1)+\cdots+\omega(G_k)}$ has upper bound $2^{k}$.
 Then (\ref{value.general}) follows from (\ref{value.ge.1}) by Abel criterion for the convergence of series.
\end{proof}
From the above lemma, we have the following corollary.
 \begin{corollary}\label{co.ge}
 Let $f:(\mathbb{A}_{+})^k\rightarrow\mathbb{C}$ be a multiplicative function with $k\in\mathbb{N}$. Assume that $(\ref{value.ge.1})$ or $(\ref{value.ge.2})$ holds, then for any $G_1,\cdots,G_k\in\mathbb{A}_{+}$, we have $(\ref{po.ge.ab})$ and $(\ref{po.ge.ab*})$, which are absolutely convergent series, and the coefficients can be written as follows:
 \begin{equation*}
 \mathcal{C}_{H_1,\cdots,H_k}=\prod_{P\in\mathbb{P}}\mathop\sum_{e_1\geq\nu_{p}{(H_1)},\cdots,e_k\geq\nu_{p}{(H_k)}}
 \frac{(\mu_{k}\ast{f})(P^{e_1},\cdots,P^{e_k})}{\left|P\right|^{e_1+\cdots+e_k}},
 \end{equation*}

 \begin{equation*}
 \mathcal{C}_{H_1,\cdots,H_k}^{\ast}=\prod_{P\in\mathbb{P}}{\mathop\sum^{'}_{e_1,\cdots,e_k}}
 \frac{(\mu_{k}\ast{f})(P^{e_1},\cdots,P^{e_k})}{\left|P\right|^{e_1+\cdots+e_k}}.
 \end{equation*}
The notation $\mathop\sum^{'}_{e_1,\cdots,e_k}$ means that for fixed $P$ and $i$,
if $\nu_{P}{(H_i)}=0$, then in the summation, $e_i$ is taken over all the integers satisfying $\geq 0$, and if $\nu_{P}{(H_i)}\geq{1}$,
then in the summation, $e_i$ is taken over all the integers satisfying $\geq 1$.
  \end{corollary}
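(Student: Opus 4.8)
The first half of the statement---that the expansions (\ref{po.ge.ab}) and (\ref{po.ge.ab*}) hold and are absolutely convergent---needs no new work. By the preceding lemma, for multiplicative $f$ the conditions (\ref{value.ge.1}) and (\ref{value.ge.2}) are each equivalent to (\ref{value.general}), so Theorem \ref{thm:general} applies verbatim and yields both expansions together with their absolute convergence. The substance of the corollary is therefore the Euler product representation of the coefficients $\mathcal{C}_{H_1,\cdots,H_k}$ and $\mathcal{C}^{\ast}_{H_1,\cdots,H_k}$, and this is where the plan concentrates.

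The engine is the multiplicativity of $\mu_k\ast f$. Since $f$ is multiplicative by hypothesis and $\mu_k$ is multiplicative (being the Dirichlet inverse of the constant function $1$), the convolution $\mu_k\ast f$ is again multiplicative in its $k$ arguments; hence $(\mu_k\ast f)(N_1,\cdots,N_k)=\prod_{P\in\mathbb{P}}(\mu_k\ast f)(P^{\nu_P(N_1)},\cdots,P^{\nu_P(N_k)})$, with $|N_1|\cdots|N_k|$ factoring compatibly. The absolute convergence provided by (\ref{value.ge.2}) is exactly the input required to rewrite each defining multiple series as a convergent infinite product of local, per-prime series.

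For $\mathcal{C}_{H_1,\cdots,H_k}$ I would put $N_i=M_iH_i$; as $M_i$ runs over $\mathbb{A}_{+}$, $N_i$ runs over all monic multiples of $H_i$, so that
\[
\mathcal{C}_{H_1,\cdots,H_k}=\mathop\sum_{\substack{N_1,\cdots,N_k\in\mathbb{A}_{+}\\ H_1|N_1,\cdots,H_k|N_k}}\frac{(\mu_k\ast f)(N_1,\cdots,N_k)}{|N_1|\cdots|N_k|}.
\]
Expanding over $P\in\mathbb{P}$, the divisibility $H_i|N_i$ becomes the local constraint $e_i:=\nu_P(N_i)\geq\nu_P(H_i)$, and the resulting product of local series is precisely the asserted formula with $\sum_{e_1\geq\nu_P(H_1),\cdots,e_k\geq\nu_P(H_k)}$.

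For the unitary coefficient $\mathcal{C}^{\ast}_{H_1,\cdots,H_k}$ the same substitution is used, now subject to $(M_i,H_i)=1$. Coprimality is itself a prime-by-prime, multiplicative condition, so it passes through the Euler product: at a prime $P$ with $\nu_P(H_i)\geq 1$ it forces $\nu_P(M_i)=0$, so that the $i$-th local exponent is $\nu_P(H_i)$, while at a prime $P$ with $\nu_P(H_i)=0$ that exponent is free to range over $\{0,1,2,\dots\}$; collecting these local constraints over all $P$ produces the primed local factor $\sum'_{e_1,\cdots,e_k}$ of the statement. The one delicate point---and the main obstacle---is the rigorous interchange of the $k$-fold infinite summation with the infinite product over $\mathbb{P}$; this is legitimate precisely because (\ref{value.ge.2}) furnishes absolute convergence, which permits unfolding an absolutely convergent multiplicative series into its Euler product in the standard manner.
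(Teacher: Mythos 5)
Your proposal follows essentially the same route as the paper: reduce the expansion statements to Theorem \ref{thm:general} via the preceding lemma, then factor the coefficient series of (\ref{po.ge.cof}) into an Euler product using the multiplicativity of $\mu_{k}\ast f$, reading off the local constraints from the substitution $N_i=M_iH_i$ (namely $e_i=\nu_P(N_i)\geq\nu_P(H_i)$ in general, and $\nu_P(M_i)=0$ whenever $P\mid H_i$ in the unitary case). One minor remark: your local analysis correctly forces $e_i=\nu_P(H_i)$ \emph{exactly} when $\nu_P(H_i)\geq 1$, which is sharper than the condition ``$e_i\geq 1$'' appearing in the statement's definition of the primed sum --- the paper's own proof derives the same exact constraint and then records it in the same relaxed form, so this is a shared imprecision rather than a gap in your argument.
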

\begin{proof}
  By Theorem \ref{thm:general}, we have the identity
 \begin{equation*}
   \mathcal{C}_{H_1,\cdots,H_k}=\mathop\sum_{M_1,\cdots,M_k\in\mathbb{A}_{+}}
 \frac{(\mu_{k}\ast{f})(M_1H_1,\cdots,M_kH_k)}{\left|{M_1H_1}\right|\cdots\left|{M_kH_k}\right|}.
 \end{equation*}
  If $f$ is multiplicative, then $\mu\ast{f}$ is also multiplicative. Since $\nu_{P}(M_{i}H_{i})\geq\nu_{P}(H_i)$ for any $1\leq{i}\leq{k}$ and $P\in\mathbb{P}$, from the multiplicativity
  of $\mu\ast{f}$, we have
 \begin{equation*}
 \begin{aligned}
   \mathcal{C}_{H_1,\cdots,H_k}
   &=\prod_{P\in\mathbb{P}}\mathop\sum_{\substack{M_1,\cdots,M_k\in\mathbb{A}_{+}\\\nu_{P}(M_{i}H_{i})\geq\nu_{P}(H_i)~(1\leq{i}\leq{k})}}
     \frac{(\mu_{k}\ast{f})(P^{\nu_{P}(M_{1}H_{1})},\cdots,P^{\nu_{P}(M_{k}H_{k})})}{|P|^{\nu_{P}(M_{1}H_{1})+\cdots+\nu_{P}(M_{k}H_{k})}}\\
   &=\prod_{P\in\mathbb{P}}\mathop\sum_{e_1\geq\nu_{p}{(H_1)},\cdots,e_k\geq\nu_{p}{(H_k)}}
     \frac{(\mu_{k}\ast{f})(P^{e_1},\cdots,P^{e_k})}{\left|P\right|^{e_1+\cdots+e_k}},
 \end{aligned}
 \end{equation*}
 where $e_{i}=\nu_{P}(M_{i}H_{i})$.

By Theorem \ref{thm:general}, we have
 \[
  \mathcal{C}_{H_1,\cdots,H_k}^{\ast}
 =\mathop\sum_{\substack{M_1,\cdots,M_k\in\mathbb{A}_{+}\\(M_1,H_1)=1,\cdots,(M_k,H_k)=1}}
  \frac{(\mu_{k}\ast{f})(M_1H_1,\cdots,M_kH_k)}{\left|{M_1H_1}\right|\cdots\left|{M_kH_k}\right|}.
 \]
 Notice that $(M_i,H_i)=1$ for $1\leq i\leq k$. For any fixed $P$ and $i$, if ${\nu_{p}{(H_i)}=0}$, that is ${(P,H_i)=1}$, then $\nu_{P}(M_{i})\geq{0}$, and
 $\nu_{P}(M_{i}H_{i})=\nu_{P}(M_{i})+\nu_{P}(H_{i})\geq{0}$.
If ${\nu_{p}{(H_i)}\geq{1}}$, that is ${P\mid{H_i}}$, then ${P\nmid {M_i}}$ and $\nu_{P}(M_{i})={0}$, it follows that
 $\nu_{P}(M_{i}H_{i})=\nu_{P}(M_{i})+\nu_{P}(H_{i})\geq{1}$.

 Finally, from the multiplicativity of $\mu\ast{f}$, we obtain
 \begin{equation*}
 \mathcal{C}_{H_1,\cdots,H_k}^{\ast}
 =\prod_{P\in\mathbb{P}}{\mathop\sum^{'}_{e_1,\cdots,e_k}}
  \frac{(\mu_{k}\ast{f})(P^{e_1},\cdots,P^{e_k})}{\left|P\right|^{e_1+\cdots+e_k}},
 \end{equation*}
 which is the desired result.
 \end{proof}

Setting $f(G_1,\cdots,G_k)=g((G_1,\cdots,G_k))$ in Theorem \ref{thm:general}, we have Theorem \ref{thm:special}. But we present a specific proof here.

\noindent\textbf{Proof of Theorem \ref{thm:special}:}
 Since
 \begin{equation*}
 \begin{split}
 g((G_1,\cdots,G_k))&=\mathop\sum_{D|(G_1,\cdots,G_k)}{(\mu\ast{g})(D)}\\
 &=\mathop\sum_{D|G_1,\cdots,D|G_k}{(\mu\ast{g})(D)}\\
 &=\mathop\sum_{D|G_1,\cdots,D|G_k}{(\mu\ast{g})((D,\cdots,D))}\\
 &=\mathop\sum_{D|G_1,\cdots,D|G_k}{(\mu_{k}\ast{f})(D,\cdots,D)},
 \end{split}
 \end{equation*}
from M\"{o}bius inversion formula, we have
\begin{equation*}
 (\mu_{k}\ast{f})(G,\cdots,G)=(\mu\ast{g})(G),
\end{equation*}
which follows that
\begin{equation}\label{20a}
(\mu_{k}\ast{f})(G_1,\cdots,G_k)=\left\{
 \begin{array}{ll}
 (\mu\ast{g})(G),&\textrm{if}~{G_1=\cdots=G_k=G},\\
 0,&{\textrm{otherwise}}.
 \end{array}
 \right.\end{equation}

 From $(\ref{po.ge.cof})$ and (\ref{20a}), we have
 \begin{align*}
    \mathcal{C}_{H_1,\cdots,H_k}
 &=\mathop\sum_{M_1,\cdots,M_k\in\mathbb{A}_{+}}
 \frac{(\mu_{k}\ast{f})(M_1H_1,\cdots,M_kH_k)}{\left|{M_1H_1}\right|\cdots\left|{M_kH_k}\right|}\\
 &=\mathop\sum_{\substack{G\in\mathbb{A}_{+}\\M_1H_1=\cdots=M_kH_k=G}}
 \frac{(\mu_{k}\ast{f})(G,\dots,G)}{\left|{G}\right|\cdots\left|{G}\right|}\\
 &=\mathop\sum_{\substack{G\in\mathbb{A}_{+}\\{H_1}|G,\cdots,{H_k}|G}}
 \frac{(\mu\ast{g})(G)}{\left|{G}\right|^{k}}\\
 &=\mathop\sum_{\substack{{G\in\mathbb{A}_{+}}\\{[H_1,\cdots,H_k]|G}}} \frac{(\mu\ast{g})(G)}{\left|{G}\right|^{k}}.
 \end{align*}
Then setting $Q=[H_1,\cdots,H_k]$ and $G=MQ$, we get
 \begin{equation*}
 \mathcal{C}_{H_1,\cdots,H_k}=\frac{1}{\left|{Q}\right|^{k}}\mathop\sum_{M\in\mathbb{A}_{+}}
 \frac{(\mu\ast{g})(MQ)}{\left|{M}\right|^{k}}.
 \end{equation*}

 From $(\ref{po.ge.cof})$ and (\ref{20a}), we have
 \begin{align*}
   \mathcal{C}_{H_1,\cdots,H_k}^{\ast}
 &=\mathop\sum_{\substack{M_1,\cdots,M_k\in\mathbb{A}_{+}\\(M_1,H_1)=1,\cdots,(M_k,H_k)=1}}
 \frac{(\mu_{k}\ast{f})(M_1H_1,\cdots,M_kH_k)}{\left|{M_1H_1}\right|\cdots\left|{M_kH_k}\right|}\\
 &=\mathop\sum_{\substack{G\in\mathbb{A}_{+}\\M_1H_1=\dots=M_kH_k=G\\(M_1,H_1)=1,\cdots,(M_k,H_k)=1}}
 \frac{(\mu_{k}\ast{f})(G,\cdots,G)}{\left|{G}\right|\cdots\left|{G}\right|}\\
 &=\mathop\sum_{\substack{G\in\mathbb{A}_{+}\\{H_1}||G,\cdots,{H_k}||G}}
 \frac{(\mu\ast{g})(G)}{\left|{G}\right|^{k}}\\
 &=\mathop\sum_{\substack{{G\in\mathbb{A}_{+}}\\{[H_1,\cdots,H_k]||G}}}\frac{(\mu\ast{g})(G)}{\left|{G}\right|^{k}}.
 \end{align*}
Then setting $Q=[H_1,\cdots,H_k]$ and $G=MQ$, we obtain
 \begin{equation*}
  \mathcal{C}_{H_1,\cdots,H_k}^{\ast}
 =\frac{1}{\left|{Q}\right|^{k}}\mathop\sum_{\substack{M\in\mathbb{A}_{+}\\(M,Q)=1}}
  \frac{(\mu\ast{g})(MQ)}{\left|{M}\right|^{k}}.
 \end{equation*}

 \begin{remark} If the function $g$ is multiplicative, then (\ref{value.special}) equals to
 \begin{equation}\label{value.sp.1}
 \mathop\sum_{G\in\mathbb{A}_{+}}\frac{\left|(\mu\ast{g})(G)\right|}
 {\left|{G}\right|^{k}}<\infty,
 \end{equation}
 and
 \begin{equation}\label{value.sp.2}
 \prod_{P\in\mathbb{P}}\mathop\sum_{e=1}^{\infty}\frac{\left|(\mu\ast{g})(P^{e})\right|}
 {\left|P\right|^{ke}}<\infty.
 \end{equation}
 \end{remark}

 We also have the following Corollary.
\begin{corollary}\label{important corollary}
 Let $g:\mathbb{A}_{+}\rightarrow\mathbb{C}$ be a multiplicative function, $k\in\mathbb{N}$. Assume that $(\ref{value.sp.1})$ or $(\ref{value.sp.2})$ holds.

 If ${\mu\ast{g}}$ is completely multiplicative, then for any $G_1,\cdots,G_k\in\mathbb{A}_{+}$, we have the absolutely convergent series $(\ref{po.sp.ab})$ and its coefficients satisfy
 \begin{equation}\label{42}
 \mathcal{C}_{H_1,\cdots,H_k}=\frac{(\mu\ast{g})(Q)}{\left|{Q}\right|^{k}}\mathop\sum_{M\in\mathbb{A}_{+}}
 \frac{(\mu\ast{g})(M)}{\left|{M}\right|^{k}}.
 \end{equation}

 If $g$ is multiplicative, then for any $G_1,\cdots,G_k\in\mathbb{A}_{+}$, we have the absolutely convergent series $(\ref{po.sp.ab*})$ and its coefficients satisfy
 \begin{equation}\label{43}
 \mathcal{C}_{H_1,\cdots,H_k}^{\ast}=\frac{(\mu\ast{g})(Q)}{\left|{Q}\right|^{k}}\mathop\sum_{\substack{M\in\mathbb{A}_{+}\\(M,Q)=1}}
 \frac{(\mu\ast{g})(M)}{\left|{M}\right|^{k}}.
 \end{equation}
 \end{corollary}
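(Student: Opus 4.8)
The plan is to obtain both coefficient formulas directly from Theorem~\ref{thm:special}, which already expresses $\mathcal{C}_{H_1,\cdots,H_k}$ and $\mathcal{C}_{H_1,\cdots,H_k}^{\ast}$ as the series in (\ref{po.sp.coef}) with $Q=[H_1,\cdots,H_k]$; the only additional input needed is to factor $(\mu\ast{g})(MQ)$ out of each sum using the appropriate multiplicativity. Before doing so I would record that, for multiplicative $g$, the hypothesis (\ref{value.special}) of Theorem~\ref{thm:special} is equivalent to the assumed conditions (\ref{value.sp.1}) and (\ref{value.sp.2}) by the preceding Remark; hence the absolute convergence of the expansions (\ref{po.sp.ab}) and (\ref{po.sp.ab*}) is inherited from Theorem~\ref{thm:special} and requires no further analytic work.

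For the first assertion I would start from
\[
\mathcal{C}_{H_1,\cdots,H_k}=\frac{1}{\left|{Q}\right|^{k}}\mathop\sum_{M\in\mathbb{A}_{+}}\frac{(\mu\ast{g})(MQ)}{\left|{M}\right|^{k}}
\]
and use that $\mu\ast g$ is \emph{completely} multiplicative to write $(\mu\ast{g})(MQ)=(\mu\ast{g})(M)\,(\mu\ast{g})(Q)$ for every monic $M$, with no coprimality needed. Pulling the constant factor $(\mu\ast{g})(Q)$ through the sum yields (\ref{42}) at once.

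For the second assertion, since $g$ is multiplicative so is $\mu\ast g$, and I would begin from the starred formula
\[
\mathcal{C}_{H_1,\cdots,H_k}^{\ast}=\frac{1}{\left|{Q}\right|^{k}}\mathop\sum_{\substack{M\in\mathbb{A}_{+}\\(M,Q)=1}}\frac{(\mu\ast{g})(MQ)}{\left|{M}\right|^{k}}.
\]
Here the summation is restricted to $(M,Q)=1$, so ordinary multiplicativity of $\mu\ast g$ already gives $(\mu\ast{g})(MQ)=(\mu\ast{g})(M)\,(\mu\ast{g})(Q)$ throughout the range of summation, and factoring out $(\mu\ast{g})(Q)$ produces (\ref{43}).

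The one point worth care --- and the reason the two parts carry different hypotheses --- is the contrast between the unrestricted sum defining $\mathcal{C}_{H_1,\cdots,H_k}$ and the coprime-restricted sum defining $\mathcal{C}_{H_1,\cdots,H_k}^{\ast}$. In the starred case the condition $(M,Q)=1$ forces $M$ and $Q$ to share no irreducible factor, so plain multiplicativity suffices; in the unstarred case $M$ ranges over all of $\mathbb{A}_{+}$ and may share factors with $Q$, so the factorization of $(\mu\ast{g})(MQ)$ is valid only under the stronger assumption that $\mu\ast g$ is completely multiplicative. This is the sole subtlety; there is no genuine analytic obstacle, since all convergence is already guaranteed by Theorem~\ref{thm:special}.
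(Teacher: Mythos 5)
Your proposal is correct and follows essentially the same route as the paper: both start from the coefficient formulas in (\ref{po.sp.coef}), factor $(\mu\ast g)(MQ)=(\mu\ast g)(Q)(\mu\ast g)(M)$ using complete multiplicativity for the unstarred sum and ordinary multiplicativity together with the restriction $(M,Q)=1$ for the starred sum, and inherit absolute convergence from Theorem \ref{thm:special} via the preceding Remark. Your explicit remark on why the two cases need different multiplicativity hypotheses is a helpful clarification the paper leaves implicit, but the argument is the same.
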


\begin{proof}
The proof is divided into two cases.

 If $\mu\ast{g}$ is completely multiplicative, then from (\ref{po.sp.coef}) we have
 \begin{align*}
   \mathcal{C}_{H_1,\cdots,H_k}
 &=\frac{1}{\left|{Q}\right|^{k}}\mathop\sum_{M\in\mathbb{A}_{+}}
 \frac{(\mu\ast{g})(MQ)}{\left|{M}\right|^{k}}\\
 &=\frac{1}{\left|{Q}\right|^{k}}\mathop\sum_{M\in\mathbb{A}_{+}}
 \frac{(\mu\ast{g})(Q)(\mu\ast{g})(M)}{\left|{M}\right|^{k}}\\
 &=\frac{(\mu\ast{g})(Q)}{\left|{Q}\right|^{k}}\mathop\sum_{M\in\mathbb{A}_{+}}
 \frac{(\mu\ast{g})(M)}{\left|{M}\right|^{k}}.
 \end{align*}

 If $g$ is multiplicative, then $\mu\ast{g}$ is multiplicative, and from (\ref{po.sp.coef}) we have
 \begin{align*}
    \mathcal{C}_{H_1,\cdots,H_k}^{\ast}&=\frac{1}{\left|{Q}\right|^{k}}\mathop\sum_{\substack{M\in\mathbb{A}_{+}\\(M,Q)=1}}
 \frac{(\mu\ast{g})(MQ)}{\left|{M}\right|^{k}}\\
 &=\frac{1}{\left|{Q}\right|^{k}}\mathop\sum_{\substack{M\in\mathbb{A}_{+}\\(M,Q)=1}}
 \frac{(\mu\ast{g})(Q)(\mu\ast{g})(M)}{\left|{M}\right|^{k}}\\
 &=\frac{(\mu\ast{g})(Q)}{\left|{Q}\right|^{k}}\mathop\sum_{\substack{M\in\mathbb{A}_{+}\\(M,Q)=1}}
 \frac{(\mu\ast{g})(M)}{\left|{M}\right|^{k}}.
 \end{align*}
 \end{proof}

From Corollary \ref{important corollary}, we can derive the following identities involving $$\zeta_{\mathbb{A}}(s)=\sum_{f\in\mathbb{A}_{+}}\frac{1}{|f|^{s}}, $$
 the zeta function of $\mathbb{A}$.
 \begin{corollary}\label{Co.sigma}
 Let the arithmetic function $\sigma_{s}$ be defined as in (\ref{sigmas}). For any $G_1,\cdots,G_k\in\mathbb{A}_{+}$, we have the following absolutely convergent series
 \begin{equation}
 \frac{\sigma_{s}{((G_1,\cdots,G_k))}}{\left|(G_1,\cdots,G_k)\right|^{s}}
 =\zeta_{\mathbb{A}}(k+s)\mathop\sum_{H_1,\cdots,H_k\in\mathbb{A}_{+}}
 \frac{\eta(G_1,H_1)\cdots\eta(G_k,H_k)}{\left|Q\right|^{k+s}},
 \end{equation}
where $s\in\mathbb{R},k+s>1$,
 \begin{equation}\label{sigma}
 \frac{\sigma{((G_1,\cdots,G_k))}}{\left|(G_1,\cdots,G_k)\right|}
 =\zeta_{\mathbb{A}}(k+1)\mathop\sum_{H_1,\cdots,H_k\in\mathbb{A}_{+}}
 \frac{\eta(G_1,H_1)\cdots\eta(G_k,H_k)}{\left|Q\right|^{k+1}}~(k\geq{1}),
 \end{equation}
 \begin{equation}\label{tau}
 \tau{((G_1,\cdots,G_k))}
 =\zeta_{\mathbb{A}}(k)\mathop\sum_{H_1,\cdots,H_k\in\mathbb{A}_{+}}
 \frac{\eta(G_1,H_1)\cdots\eta(G_k,H_k)}{\left|Q\right|^{k}}~(k\geq{2}).
 \end{equation}
 \end{corollary}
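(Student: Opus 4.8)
The plan is to deduce all three displayed identities from Corollary \ref{important corollary} by a single judicious choice of the one--variable function $g$, the only real work being to identify its M\"{o}bius transform. For the general identity I would set
\[
g(G)=\frac{\sigma_{s}(G)}{\left|G\right|^{s}}\qquad(G\in\mathbb{A}_{+}),
\]
and observe that this is nothing but $\sigma_{-s}$: replacing the summation variable $D$ by $G/D$ in (\ref{sigmas}) gives $g(G)=\sum_{D\mid G}\left|D\right|^{s}/\left|G\right|^{s}=\sum_{D\mid G}\left|D\right|^{-s}=\sigma_{-s}(G)$. In particular $g$ is multiplicative, so the remark preceding Corollary \ref{important corollary} applies and the three convergence conditions are interchangeable.

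First I would compute $\mu\ast g$. Since $\sigma_{-s}=\mathbf{1}\ast N^{-s}$, where $N^{-s}(D)=\left|D\right|^{-s}$ and $\mathbf{1}$ denotes the constant function $1$, and since $\mu$ is the Dirichlet inverse of $\mathbf{1}$, associativity yields $\mu\ast g=(\mu\ast\mathbf{1})\ast N^{-s}=N^{-s}$, i.e. $(\mu\ast g)(G)=\left|G\right|^{-s}$ for all $G\in\mathbb{A}_{+}$; equivalently a direct computation on prime powers gives $(\mu\ast\sigma_{-s})(P^{e})=\sigma_{-s}(P^{e})-\sigma_{-s}(P^{e-1})=\left|P\right|^{-es}$. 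The crucial observation is that because $\left|G_{1}G_{2}\right|=\left|G_{1}\right|\left|G_{2}\right|$ holds for \emph{all} $G_{1},G_{2}$, the function $\mu\ast g$ is \emph{completely} multiplicative, so the first half of Corollary \ref{important corollary} (formula (\ref{42})) is available.

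Next I would verify the hypothesis and read off the coefficient. By (\ref{value.sp.1}),
\[
\mathop\sum_{G\in\mathbb{A}_{+}}\frac{\left|(\mu\ast g)(G)\right|}{\left|G\right|^{k}}=\mathop\sum_{G\in\mathbb{A}_{+}}\frac{1}{\left|G\right|^{k+s}}=\zeta_{\mathbb{A}}(k+s),
\]
which is finite exactly when $k+s>1$, matching the stated range. Then (\ref{42}) gives
\[
\mathcal{C}_{H_1,\cdots,H_k}=\frac{(\mu\ast g)(Q)}{\left|Q\right|^{k}}\mathop\sum_{M\in\mathbb{A}_{+}}\frac{(\mu\ast g)(M)}{\left|M\right|^{k}}=\frac{\left|Q\right|^{-s}}{\left|Q\right|^{k}}\,\zeta_{\mathbb{A}}(k+s)=\frac{\zeta_{\mathbb{A}}(k+s)}{\left|Q\right|^{k+s}},
\]
and substituting this into the absolutely convergent expansion (\ref{po.sp.ab}) produces the first identity. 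Finally I would specialize: taking $s=1$ turns $g$ into $\sigma/\left|\cdot\right|$ and the range $k+s>1$ into $k\geq 1$, yielding (\ref{sigma}); taking $s=0$ gives $g=\tau$ with $\left|(G_1,\cdots,G_k)\right|^{0}=1$ and the range $k>1$, i.e. $k\geq 2$, yielding (\ref{tau}). I do not expect a genuine obstacle; the only points requiring care are the reindexing $g=\sigma_{-s}$ and keeping the exponent bookkeeping honest (the exponent is $k+s$, not $k-s$), so that the convergence range $k+s>1$ lines up with the prefactor $\zeta_{\mathbb{A}}(k+s)$.
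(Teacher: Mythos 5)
Your proposal is correct and follows essentially the same route as the paper: both set $g(G)=\sigma_{s}(G)/\left|G\right|^{s}$, identify $(\mu\ast g)(G)=\left|G\right|^{-s}$ (the paper via M\"obius inversion on $g(G)=\sum_{D\mid G}\left|G/D\right|^{-s}$, you via $\sigma_{-s}=\mathbf{1}\ast N^{-s}$, which is the same computation), invoke complete multiplicativity to apply formula (\ref{42}), obtain $\mathcal{C}_{H_1,\cdots,H_k}=\zeta_{\mathbb{A}}(k+s)/\left|Q\right|^{k+s}$, and specialize to $s=1$ and $s=0$. Your explicit verification that the hypothesis (\ref{value.sp.1}) reduces to $\zeta_{\mathbb{A}}(k+s)<\infty$ for $k+s>1$ is a small point the paper leaves implicit, but the argument is otherwise identical.
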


 \begin{corollary}\label{Co.sigma*}
 Let the arithmetic functions $\sigma_{s}$, $\phi_{s}$ be defined as in (\ref{sigmas}) and (\ref{phi}), respectively. For any $G_1,\cdots,G_k\in\mathbb{A}_{+}$, we have the following absolutely convergent series
 \begin{equation}
 \frac{\sigma_{s}{((G_1,\cdots,G_k))}}{\left|(G_1,\cdots,G_k)\right|^{s}}
 =\zeta_{\mathbb{A}}(k+s)\mathop\sum_{H_1,\cdots,H_k\in\mathbb{A}_{+}}
 \frac{\phi_{k+s}(Q)\eta^{\ast}(G_1,H_1)\cdots\eta^{\ast}(G_k,H_k)}{\left|Q\right|^{2(k+s)}},
 \end{equation}
 where $s\in\mathbb{R},k+s>{1}$,
 \begin{equation}\label{sigma*}
 \frac{\sigma{((G_1,\cdots,G_k))}}{\left|(G_1,\cdots,G_k)\right|}
 =\zeta_{\mathbb{A}}(k+1)\mathop\sum_{H_1,\cdots,H_k\in\mathbb{A}_{+}}
 \frac{\phi_{k+1}(Q)\eta^{\ast}(G_1,H_1)\cdots\eta^{\ast}(G_k,H_k)}{\left|Q\right|^{2(k+1)}},
 \end{equation}
 where $k\geq{1}$,
 \begin{equation}\label{tau*}
 \tau{((G_1,\cdots,G_k))}
 =\zeta_{\mathbb{A}}(k)\mathop\sum_{H_1,\cdots,H_k\in\mathbb{A}_{+}}
 \frac{\phi_{k}(Q)\eta^{\ast}(G_1,H_1)\cdots\eta^{\ast}(G_k,H_k)}{\left|Q\right|^{2k}}~(k\geq{2}).
 \end{equation}
 \end{corollary}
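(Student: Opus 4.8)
The plan is to deduce all three identities of Corollary~\ref{Co.sigma*} from the unitary branch of Corollary~\ref{important corollary} by choosing the right single-variable function $g$. The key observation is that the left-hand side equals $g((G_1,\ldots,G_k))$ for $g(G)=\sigma_s(G)/|G|^s$, and that this $g$ is simply $\sigma_{-s}$. Indeed, substituting the monic divisor $E=G/D$ in the defining sum (\ref{sigmas}) gives $\sigma_s(G)/|G|^s=\sum_{D\mid G}|D|^s/|G|^s=\sum_{E\mid G}|E|^{-s}=\sigma_{-s}(G)$; in particular $g$ is multiplicative, so the hypothesis of the unitary part of Corollary~\ref{important corollary} is met.

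First I would compute $\mu\ast g$. Writing $N_t(G)=|G|^t$, one reads off from (\ref{sigmas}) that $\sigma_{-s}=N_{-s}\ast 1$, and since $\mu$ is the Dirichlet inverse of the constant function $1$, associativity gives $\mu\ast g=N_{-s}\ast(\mu\ast 1)=N_{-s}$. Thus $(\mu\ast g)(G)=|G|^{-s}$, which is even completely multiplicative. This settles the convergence requirement at once: in criterion (\ref{value.sp.2}) each local factor becomes $\sum_{e\geq 1}|P|^{-e(k+s)}=|P|^{-(k+s)}/(1-|P|^{-(k+s)})$, so the Euler product over $P\in\mathbb{P}$ converges precisely when $\sum_P|P|^{-(k+s)}<\infty$, i.e. when $k+s>1$, which is the stated range.

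With the hypotheses verified, formula (\ref{43}) yields $\mathcal{C}^{\ast}_{H_1,\ldots,H_k}=\frac{(\mu\ast g)(Q)}{|Q|^k}\sum_{(M,Q)=1}\frac{(\mu\ast g)(M)}{|M|^k}$ with $Q=[H_1,\ldots,H_k]$. Substituting $(\mu\ast g)(Q)=|Q|^{-s}$ and expanding the coprime sum through the Euler product of $\zeta_{\mathbb{A}}$ gives $\sum_{(M,Q)=1}|M|^{-(k+s)}=\prod_{P\nmid Q}(1-|P|^{-(k+s)})^{-1}=\zeta_{\mathbb{A}}(k+s)\prod_{P\mid Q}(1-|P|^{-(k+s)})$. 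Recognising from the product formula in (\ref{phi}) that $\prod_{P\mid Q}(1-|P|^{-(k+s)})=\phi_{k+s}(Q)/|Q|^{k+s}$, the coefficient collapses to $\zeta_{\mathbb{A}}(k+s)\,\phi_{k+s}(Q)/|Q|^{2(k+s)}$, exactly the claimed value (the exponents combine as $-s-k-(k+s)=-2(k+s)$). Feeding this into (\ref{po.sp.ab*}) produces the first identity; the cases of $\sigma$ and $\tau$ then follow by specialising $s=1$ (so that $k+s>1$ forces $k\geq 1$) and $s=0$ with $\sigma_0=\tau$ (forcing $k\geq 2$).

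The computations are all standard Euler-product manipulations; the only place needing care is the bookkeeping that converts the restricted sum $\sum_{(M,Q)=1}$ into the finite correction factor $\prod_{P\mid Q}(1-|P|^{-(k+s)})$ and identifies it with $\phi_{k+s}(Q)/|Q|^{k+s}$, since this is precisely where the exponent $2(k+s)$ in the denominator is generated. I expect no genuine obstacle beyond keeping these powers of $|Q|$ straight.
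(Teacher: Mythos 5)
Your proof is correct and follows essentially the same route as the paper's: both choose $g(G)=\sigma_{s}(G)/\left|G\right|^{s}$, deduce $(\mu\ast g)(G)=\left|G\right|^{-s}$, and evaluate the coefficient from (\ref{43}) by pulling the coprime sum apart as an Euler product and identifying $\prod_{P\mid Q}\left(1-\left|P\right|^{-(k+s)}\right)$ with $\phi_{k+s}(Q)/\left|Q\right|^{k+s}$ before specializing $s=1$ and $s=0$. The only difference is cosmetic: you verify the convergence hypothesis $k+s>1$ explicitly, which the paper leaves implicit.
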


\noindent\textbf{Proofs of Corollary \ref{Co.sigma} and Corollary \ref{Co.sigma*}:}
Set
 \[
 g(G)=\frac{\sigma_{s}{(G)}}{\left|{G}\right|^{s}},
 \]
 where
 \[
 \sigma_{s}(G)=\mathop\sum_{D|G}\left|{D}\right|^{s}.
 \]
We have
 \begin{align}\label{star}
 g(G)=\frac{\sigma_{s}{(G)}}{\left|{G}\right|^{s}}
 =\frac{\mathop\sum_{D|G}\left|{D}\right|^{s}}{\left|{G}\right|^{s}}
 =\mathop\sum_{D|G}\frac{1}{\left|{\frac{G}{D}}\right|^{s}}.
 \end{align}
Then by M\"obius  inversion formula, we obtain
\begin{equation}\label{Mobius}
 (\mu\ast{g})(G)=\frac{1}{\left|{G}\right|^{s}}.
\end{equation}
It is easy to check that $g$ is multiplicative, and $\mu\ast{g}$ is completely multiplicative.

From (\ref{42}) and (\ref{Mobius}), we have
 \begin{equation*}
 \begin{split}
 \mathcal{C}_{H_1,\cdots,H_k}&=\frac{(\mu\ast{g})(Q)}{\left|{Q}\right|^{k}}\mathop\sum_{M\in\mathbb{A}_{+}}
 \frac{(\mu\ast{g})(M)}{\left|{M}\right|^{k}}\\
 &=\frac{1}{\left|{Q}\right|^{k+s}}\mathop\sum_{M\in\mathbb{A}_{+}}\frac{1}{\left|{M}\right|^{k+s}}\\
 &=\frac{\zeta_{\mathbb{A}}(k+s)}{\left|{Q}\right|^{k+s}}.
 \end{split}
 \end{equation*}
Hence applying (\ref{po.sp.ab}) to
 \[
 g(G)=\frac{\sigma_{s}{(G)}}{\left|{G}\right|^{s}},
 \] we get
 \begin{equation*}
\frac{\sigma_{s}{((G_1,\cdots,G_k))}}{\left|(G_1,\cdots,G_k)\right|^{s}}
 =\zeta_{\mathbb{A}}(k+s)\mathop\sum_{H_1,\cdots,H_k\in\mathbb{A}_{+}}
 \frac{\eta(G_1,H_1)\cdots\eta(G_k,H_k)}{\left|Q\right|^{k+s}}.
 \end{equation*}

Setting $s=1$ and $s=0$ in the above equality, we obtain (\ref{sigma}) and (\ref{tau}), respectively.

Now we prove (\ref{sigma*}) and (\ref{tau*}). From (\ref{43}) and (\ref{Mobius}), we have
\begin{align*}
 \mathcal{C}_{H_1,\cdots,H_k}^{\ast}
 &=\frac{(\mu\ast{g})(Q)}{\left|{Q}\right|^{k}}\mathop\sum_{\substack{M\in\mathbb{A}_{+}\\(M,Q)=1}}
 \frac{(\mu\ast{g})(M)}{\left|{M}\right|^{k}}\\
 &=\frac{1}{\left|{Q}\right|^{k+s}}\mathop\sum_{\substack{M\in\mathbb{A}_{+}\\(M,Q)=1}}\frac{1}{\left|{M}\right|^{k+s}}\\
 &=\frac{1}{\left|{Q}\right|^{k+s}}\prod_{\substack{P\in\mathbb{P}\\(P,Q)=1}}\left(1-\frac{1}{\left|{P}\right|^{k+s}}\right)^{-1}\\
 &=\frac{1}{\left|{Q}\right|^{k+s}}\frac{\prod_{P\in\mathbb{P}}\left(1-\frac{1}{\left|{P}\right|^{k+s}}\right)^{-1}}
 {\prod_{P|Q}\left({1-\frac{1}{\left|{P}\right|^{k+s}}}\right)^{-1}}\\
 &=\frac{\zeta_{\mathbb{A}}(k+s)}{\left|{Q}\right|^{2(k+s)}}\left|{Q}\right|^{k+s}\prod_{P|Q}\left(1-\frac{1}{\left|{P}\right|^{k+s}}\right)\\
 &=\zeta_{\mathbb{A}}(k+s)\frac{\phi_{k+s}(Q)}{\left|{Q}\right|^{2(k+s)}}.
\end{align*}
Hence applying (\ref{po.sp.ab*}) to
 \[
 g(G)=\frac{\sigma_{s}{(G)}}{\left|{G}\right|^{s}},
 \] we get
 \begin{equation*}
 \frac{\sigma_{s}{((G_1,\cdots,G_k))}}{\left|(G_1,\cdots,G_k)\right|^{s}}
 =\zeta_{\mathbb{A}}(k+s)\mathop\sum_{H_1,\cdots,H_k\in\mathbb{A}_{+}}
 \frac{\phi_{k+s}(Q)\eta^{\ast}(G_1,H_1)\cdots\eta^{\ast}(G_k,H_k)}{\left|Q\right|^{2(k+s)}}.
 \end{equation*}

Setting $s=1$ and $s=0$, we obtain (\ref{sigma*}) and (\ref{tau*}), respectively.
We completes the proofs of Corollary \ref{Co.sigma} and Corollary \ref{Co.sigma*}.

\begin{corollary}\label{coro5}
Let the arithmetic functions $\beta_{s}, \lambda$ and $\psi_{s}$ be defined as in (\ref{beta}), (\ref{lambda}) and (\ref{psi}), respectively.
For any  $G_1,\cdots,G_k\in\mathbb{A}_{+}$, $s\in\mathbb{R},s+k>1$, we have the following absolutely convergent series
 \begin{equation}
 \frac{\beta_{s}{((G_1,\cdots,G_k))}}{\left|(G_1,\cdots,G_k)\right|^{s}}
 =\frac{\zeta_{\mathbb{A}}(2(k+s))}{\zeta_{\mathbb{A}}(k+s)}\mathop\sum_{H_1,\cdots,H_k\in\mathbb{A}_{+}}
 \frac{\lambda{(Q)}\eta(G_1,H_1)\cdots\eta(G_k,H_k)}{\left|Q\right|^{k+s}},
 \end{equation}
 \begin{equation}\label{55}
 \frac{\beta_{s}{((G_1,\cdots,G_k))}}{\left|(G_1,\cdots,G_k)\right|^{s}}
 =\frac{\zeta_{\mathbb{A}}(2(k+s))}{\zeta_{\mathbb{A}}(k+s)}\mathop\sum_{H_1,\cdots,H_k\in\mathbb{A}_{+}}
 \frac{\lambda{(Q)}\psi_{k+s}(Q)\eta^\ast(G_1,H_1)\cdots\eta^\ast(G_k,H_k)}{\left|Q\right|^{2(k+s)}}.
 \end{equation}
 \end{corollary}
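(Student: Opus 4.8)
The plan is to imitate the proofs of Corollaries \ref{Co.sigma} and \ref{Co.sigma*}: reduce everything to a one–variable computation of $\mu\ast g$ and then quote Corollary \ref{important corollary}. First I would set
\[
g(G)=\frac{\beta_{s}(G)}{|G|^{s}}=\frac{1}{|G|^{s}}\mathop\sum_{D|G}|D|^{s}\lambda\!\left(\frac{G}{D}\right)=\mathop\sum_{E|G}\frac{\lambda(E)}{|E|^{s}},
\]
where in the last step $E=G/D$. Thus $g=\mathbf{1}\ast h$ with $\mathbf{1}$ the constant function $1$ and $h(E)=\lambda(E)|E|^{-s}$, so M\"{o}bius inversion gives at once $(\mu\ast g)(G)=\lambda(G)/|G|^{s}$. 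Since $\lambda$ is completely multiplicative by (\ref{lambda}) and $G\mapsto|G|^{-s}$ is completely multiplicative because $|GH|=|G||H|$, the function $\mu\ast g$ is completely multiplicative and $g$ itself is multiplicative. Moreover $|(\mu\ast g)(G)|=|G|^{-s}$, so condition (\ref{value.sp.1}) becomes $\sum_{G\in\mathbb{A}_{+}}|G|^{-(k+s)}=\zeta_{\mathbb{A}}(k+s)<\infty$, which holds precisely under the hypothesis $s+k>1$; hence both branches of Corollary \ref{important corollary} are available.

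For the expansion in ordinary sums $\eta$ I would use (\ref{42}). Writing $w=k+s$, the needed series is evaluated by an Euler product over $\mathbb{P}$,
\[
\mathop\sum_{M\in\mathbb{A}_{+}}\frac{\lambda(M)}{|M|^{w}}=\prod_{P\in\mathbb{P}}\frac{1}{1+|P|^{-w}}=\prod_{P\in\mathbb{P}}\frac{1-|P|^{-w}}{1-|P|^{-2w}}=\frac{\zeta_{\mathbb{A}}(2w)}{\zeta_{\mathbb{A}}(w)},
\]
using $(1+x)^{-1}=(1-x)/(1-x^{2})$. Combining this with $(\mu\ast g)(Q)=\lambda(Q)/|Q|^{s}$ in (\ref{42}) yields $\mathcal{C}_{H_1,\cdots,H_k}=\dfrac{\lambda(Q)}{|Q|^{k+s}}\cdot\dfrac{\zeta_{\mathbb{A}}(2(k+s))}{\zeta_{\mathbb{A}}(k+s)}$, and substituting into (\ref{po.sp.ab}) gives the first displayed identity.

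For the unitary expansion I would instead use (\ref{43}), where the summation is restricted by $(M,Q)=1$. The same Euler product, now omitting the factors at $P\mid Q$, gives
\[
\mathop\sum_{\substack{M\in\mathbb{A}_{+}\\(M,Q)=1}}\frac{\lambda(M)}{|M|^{w}}=\frac{\zeta_{\mathbb{A}}(2w)}{\zeta_{\mathbb{A}}(w)}\prod_{P\mid Q}\bigl(1+|P|^{-w}\bigr)=\frac{\zeta_{\mathbb{A}}(2w)}{\zeta_{\mathbb{A}}(w)}\cdot\frac{\psi_{w}(Q)}{|Q|^{w}},
\]
the last equality being the definition (\ref{psi}) of $\psi_{w}$. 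Feeding this into (\ref{43}) and collecting the powers of $|Q|$ (namely $s+k+(k+s)=2(k+s)$) produces $\mathcal{C}^{\ast}_{H_1,\cdots,H_k}=\dfrac{\zeta_{\mathbb{A}}(2(k+s))}{\zeta_{\mathbb{A}}(k+s)}\cdot\dfrac{\lambda(Q)\psi_{k+s}(Q)}{|Q|^{2(k+s)}}$, and (\ref{po.sp.ab*}) then yields (\ref{55}).

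The argument is routine once Corollary \ref{important corollary} is in hand; the only step requiring genuine care is the bookkeeping in the unitary case, namely correctly recognizing the leftover local factors $\prod_{P\mid Q}(1+|P|^{-w})$ created by the coprimality restriction $(M,Q)=1$ as $\psi_{k+s}(Q)/|Q|^{k+s}$, exactly in parallel with the treatment of $\phi_{k+s}$ in the proof of Corollary \ref{Co.sigma*}. Specializing $s=1$ and $s=0$ would then record the corresponding $\sigma$- and $\tau$-type identities.
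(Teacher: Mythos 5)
Your proposal is correct and follows essentially the same route as the paper: the same choice $g(G)=\beta_{s}(G)/|G|^{s}$, the same M\"obius inversion giving $(\mu\ast g)(G)=\lambda(G)/|G|^{s}$, and the same Euler-product evaluations via Corollary \ref{important corollary} (formulas (\ref{42}) and (\ref{43})), with the local factors at $P\mid Q$ correctly identified as $\psi_{k+s}(Q)/|Q|^{k+s}$. The only (welcome) addition is your explicit verification of the convergence hypothesis (\ref{value.sp.1}) under $k+s>1$, which the paper leaves implicit.
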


\begin{proof}
Set
 \[
 g(G)=\frac{\beta_{s}{(G)}}{\left|{G}\right|^{s}},
 \]
 where
 \[
 \beta_{s}(G)=\mathop\sum_{D|G}\left|{D}\right|^{s}\lambda{\left({\frac{G}{D}}\right)}
 \]
 and the inner function $\lambda(G)=(-1)^{\Omega(G)}$ with $\Omega(G)=\sum_{P}\nu_{P}(G)$.

 We have
 \begin{equation*}
 g(G)=\frac{\beta_{s}(G)}{\left|{G}\right|^{s}}
 =\frac{\mathop\sum_{D|G}\left|{D}\right|^{s}\lambda{\left({\frac{G}{D}}\right)}}{\left|{G}\right|^{s}}
 =\mathop\sum_{D|G}\frac{\lambda{\left({\frac{G}{D}}\right)}}{\left|{\frac{G}{D}}\right|^{s}}.
 \end{equation*}
Then by M\"obius  inversion formula, we obtain
\begin{equation}\label{54}
 (\mu\ast{g})(G)=\frac{\lambda{(G)}}{\left|{G}\right|^{s}}.
\end{equation}
It is easy to check that $g$ is multiplicative, and $\mu\ast{g}$ is completely multiplicative.

From (\ref{42}) and (\ref{54}), we have
\begin{align*}
  \mathcal{C}_{H_1,\cdots,H_k}&=\frac{(\mu\ast{g})(Q)}{\left|{Q}\right|^{k}}\mathop\sum_{M\in\mathbb{A}_{+}}
 \frac{(\mu\ast{g})(M)}{\left|{M}\right|^{k}}\\
 &=\frac{\lambda{(Q)}}{\left|{Q}\right|^{k+s}}\mathop\sum_{M\in\mathbb{A}_{+}}\frac{\lambda{(M)}}{\left|{M}\right|^{k+s}}\\
 &=\frac{\lambda{(Q)}}{\left|{Q}\right|^{k+s}}\prod_{P\in\mathbb{P}}\left(\mathop\sum_{e=0}^{\infty}
 \frac{\lambda{(P^{e})}}{\left|{P}\right|^{e(k+s)}}\right)\\
 &=\frac{\lambda{(Q)}}{\left|{Q}\right|^{k+s}}\prod_{P\in\mathbb{P}}\left(1-\frac{\lambda{(P)}}{\left|{P}\right|^{k+s}}\right)^{-1}\\
 &=\frac{\lambda{(Q)}}{\left|{Q}\right|^{k+s}}\prod_{P\in\mathbb{P}}\left(1+\frac{1}{\left|{P}\right|^{k+s}}\right)^{-1}\\
 &=\frac{\lambda{(Q)}}{\left|{Q}\right|^{k+s}}\frac{\prod_{P\in\mathbb{P}}\left(1-\frac{1}{\left|{P}\right|^{2(k+s)}}\right)^{-1}}
 {\prod_{P\in\mathbb{P}}\left(1-\frac{1}{\left|{P}\right|^{k+s}}\right)^{-1}}\\
 &=\frac{\zeta_{\mathbb{A}}(2(k+s))}{\zeta_{\mathbb{A}}(k+s)}{\frac{\lambda{(Q)}}{\left|{Q}\right|^{k+s}}}.
\end{align*}
Hence applying (\ref{po.sp.ab}) to
 \[
 g(G)=\frac{\beta_{s}{(G)}}{\left|{G}\right|^{s}},
 \]
 we get
 \[
 \frac{\beta_{s}{((G_1,\cdots,G_k))}}{\left|(G_1,\cdots,G_k)\right|^{s}}
 =\frac{\zeta_{\mathbb{A}}(2(k+s))}{\zeta_{\mathbb{A}}(k+s)}\mathop\sum_{H_1,\cdots,H_k\in\mathbb{A}_{+}}
 \frac{\lambda{(Q)}\eta(G_1,H_1)\cdots\eta(G_k,H_k)}{\left|{Q}\right|^{k+s}}.
 \]

Now we prove (\ref{55}). From (\ref{43}) and (\ref{54}), we have
\begin{align*}
  \mathcal{C}^{\ast}_{H_1,\cdots,H_k}&=\frac{(\mu\ast{g})(Q)}{\left|{Q}\right|^{k}}\mathop\sum_{\substack{M\in\mathbb{A}_{+}\\(M,Q)=1}}
 \frac{(\mu\ast{g})(M)}{\left|{M}\right|^{k}}\\
 &=\frac{\lambda{(Q)}}{\left|{Q}\right|^{k+s}}\mathop\sum_{\substack{M\in\mathbb{A}_{+}\\(M,Q)=1}}\frac{\lambda{(M)}}{\left|{M}\right|^{k+s}}\\
 &=\frac{\lambda{(Q)}}{\left|{Q}\right|^{k+s}}\prod_{\substack{P\in\mathbb{P}\\(P,Q)=1}}\left(\mathop\sum_{e=0}^{\infty}
 \frac{\lambda{(P^{e})}}{\left|{P}\right|^{e(k+s)}}\right)\\
 &=\frac{\lambda{(Q)}}{\left|{Q}\right|^{k+s}}\prod_{\substack{P\in\mathbb{P}\\(P,Q)=1}}\left(1-\frac{\lambda{(P)}}{\left|{P}\right|^{k+s}}\right)^{-1}\\
 &=\frac{\lambda{(Q)}}{\left|{Q}\right|^{k+s}}\prod_{\substack{P\in\mathbb{P}\\(P,Q)=1}}\left(1+\frac{1}{\left|{P}\right|^{k+s}}\right)^{-1}\\
 &=\frac{\lambda{(Q)}}{\left|{Q}\right|^{k+s}}\frac{\prod_{P\in\mathbb{P}}\left(1+\frac{1}{\left|{P}\right|^{k+s}}\right)^{-1}}
 {\prod_{P|Q}\left(1+\frac{1}{\left|{P}\right|^{k+s}}\right)^{-1}}\\
 &=\frac{\lambda{(Q)}}{\left|{Q}\right|^{2(k+s)}}\left({\left|{Q}\right|^{k+s} \prod_{P|Q}\left(1+\frac{1}{\left|{P}\right|^{k+s}}\right)}\right)
 \prod_{P\in\mathbb{P}}\left(1+\frac{1}{\left|{P}\right|^{k+s}}\right)^{-1}\\
 &=\frac{\zeta_{\mathbb{A}}(2(k+s))}{\zeta_{\mathbb{A}}(k+s)}
 \frac{\lambda{(Q)}\psi_{k+s}(Q)}{\left|{Q}\right|^{2(k+s)}}.
\end{align*}
Hence applying (\ref{po.sp.ab*}) to
 \[
 g(G)=\frac{\beta_{s}{(G)}}{\left|{G}\right|^{s}},
 \]  we get
  \[
 \frac{\beta_{s}{((G_1,\cdots,G_k))}}{\left|(G_1,\cdots,G_k)\right|^{s}}
 =\frac{\zeta_{\mathbb{A}}(2(k+s))}{\zeta_{\mathbb{A}}(k+s)}\mathop\sum_{H_1,\cdots,H_k\in\mathbb{A}_{+}}
 \frac{\lambda{(Q)}\psi_{k+s}(Q)\eta^\ast(G_1,H_1)\cdots\eta^\ast(G_k,H_k)}{\left|Q\right|^{2(k+s)}}.
 \]
 \end{proof}

 \begin{corollary}\label{coll.special}
 For any $G_1,\cdots,G_k\in\mathbb{A}_{+}$, $k\in\mathbb{N}$, we have the following absolutely convergent series
 \begin{equation}
 \frac{\phi_{s}{((G_1,\cdots,G_k))}}{\left|(G_1,\cdots,G_k)\right|^{s}}
 =\frac{1}{\zeta_{\mathbb{A}}(k+s)}\mathop\sum_{H_1,\cdots,H_k\in\mathbb{A}_{+}}
 \frac{\mu{(Q)}\eta^\ast(G_1,H_1)\cdots\eta^\ast(G_k,H_k)}{\phi_{k+s}(Q)},
 \end{equation}
 where $s\in\mathbb{R}$ with $k+s>1$,
 \begin{equation}\label{phi*}
 \frac{\phi{((G_1,\cdots,G_k))}}{\left|(G_1,\cdots,G_k)\right|}
 =\frac{1}{\zeta_{\mathbb{A}}(k+1)}\mathop\sum_{H_1,\cdots,H_k\in\mathbb{A}_{+}}
 \frac{\mu{(Q)}\eta^\ast(G_1,H_1)\cdots\eta^\ast(G_k,H_k)}{\phi_{k+1}(Q)},
 \end{equation}
 where $k\geq{1}$.
 \end{corollary}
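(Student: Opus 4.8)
The plan is to follow the template already used for Corollaries \ref{Co.sigma}, \ref{Co.sigma*} and \ref{coro5}: set
\[
g(G)=\frac{\phi_{s}(G)}{|G|^{s}},
\]
compute $(\mu\ast g)(G)$ explicitly, feed it into the coefficient formula of Corollary \ref{important corollary}, and then invoke Theorem \ref{thm:special}. First I would use the defining formula (\ref{phi}) to write
\[
g(G)=\frac{1}{|G|^{s}}\mathop\sum_{D\mid G}\mu\!\left(\frac{G}{D}\right)|D|^{s}
=\mathop\sum_{D\mid G}\frac{\mu(D)}{|D|^{s}},
\]
so that M\"obius inversion yields the clean identity $(\mu\ast g)(G)=\mu(G)/|G|^{s}$, exactly as in (\ref{Mobius}) and (\ref{54}). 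Here $g$ is multiplicative, being a quotient of multiplicative functions. A crucial observation is that $\mu\ast g$ is multiplicative but \emph{not} completely multiplicative, since $\mu(P^{2})=0\neq\mu(P)^{2}$. This is precisely why the statement carries only the unitary expansion through $\eta^{\ast}$: the hypothesis of (\ref{42}) fails, whereas the hypothesis of (\ref{43}) (that $g$ itself be multiplicative) holds. Moreover the convergence condition (\ref{value.sp.1}) reduces to $\sum_{G}|\mu(G)|\,|G|^{-(k+s)}<\infty$, i.e. the Euler product $\prod_{P}\bigl(1+|P|^{-(k+s)}\bigr)$, which converges under the standing assumption $k+s>1$.

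Next I would substitute $(\mu\ast g)(Q)=\mu(Q)|Q|^{-s}$ into (\ref{43}) and evaluate the remaining sum as a restricted Euler product,
\[
\mathop\sum_{\substack{M\in\mathbb{A}_{+}\\(M,Q)=1}}\frac{(\mu\ast g)(M)}{|M|^{k}}
=\prod_{\substack{P\in\mathbb{P}\\ P\nmid Q}}\Bigl(1-\frac{1}{|P|^{k+s}}\Bigr).
\]
The main piece of bookkeeping is to rewrite this as the full product $\prod_{P}(1-|P|^{-(k+s)})=1/\zeta_{\mathbb{A}}(k+s)$ divided by the local factors over $P\mid Q$, and then to recognise, again from (\ref{phi}), that $\prod_{P\mid Q}(1-|P|^{-(k+s)})=\phi_{k+s}(Q)/|Q|^{k+s}$. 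Collecting the powers of $|Q|$ I expect the coefficient to collapse to
\[
\mathcal{C}^{\ast}_{H_{1},\cdots,H_{k}}
=\frac{1}{\zeta_{\mathbb{A}}(k+s)}\,\frac{\mu(Q)}{\phi_{k+s}(Q)},
\qquad Q=[H_{1},\cdots,H_{k}].
\]

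Finally I would insert this coefficient into the unitary expansion (\ref{po.sp.ab*}) of Theorem \ref{thm:special}, applied to $g(G)=\phi_{s}(G)/|G|^{s}$, to obtain the first displayed identity; specialising $s=1$ then gives (\ref{phi*}). I do not anticipate a genuine obstacle, as the argument runs parallel to the preceding corollaries; the only delicate points are the observation that $\mu\ast g$ fails to be completely multiplicative (which restricts us to the unitary statement alone, with no companion $\eta$-expansion) and the careful factoring of the restricted Euler product into $\zeta_{\mathbb{A}}(k+s)$ and the Jordan totient $\phi_{k+s}(Q)$.
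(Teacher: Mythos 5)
Your proposal is correct and follows essentially the same route as the paper: the same choice $g(G)=\phi_{s}(G)/|G|^{s}$, the same M\"obius inversion giving $(\mu\ast g)(G)=\mu(G)/|G|^{s}$, the same use of (\ref{43}) and the same factoring of the restricted Euler product into $1/\zeta_{\mathbb{A}}(k+s)$ and $\phi_{k+s}(Q)/|Q|^{k+s}$. One side remark is inaccurate, though it does not affect this corollary: the failure of complete multiplicativity does not preclude a companion $\eta$-expansion --- the paper obtains one in Corollary \ref{Cor.last} by returning to (\ref{po.sp.coef}) and noting that $\mu(MQ)=0$ unless $(M,Q)=1$.
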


 \begin{proof}
Set
 \[
 g(G)=\frac{\phi_{s}{(G)}}{\left|{G}\right|^{s}},
 \]
 where
 \[
 \phi_{s}{(G)}=\mathop\sum_{D|G}\mu{\left(\frac{G}{D}\right)}\left|{D}\right|^{s}.
 \]
 We have
 \begin{equation*}
   g(G)=\frac{\phi_{s}{(G)}}{\left|{G}\right|^{s}}=\frac{\mathop\sum_{D|G}\mu{\left(\frac{G}{D}\right)}\left|{D}\right|^{s}}{\left|{G}\right|^{s}}
   =\mathop\sum_{D|G}\frac{\mu{\left(\frac{G}{D}\right)}}{\left|\frac{G}{D}\right|^{s}}.
 \end{equation*}
  Then by M\"obius  inversion formula, we obtain
\begin{equation}\label{71}
 (\mu\ast{g})(G)=\frac{\mu{(G)}}{\left|{G}\right|^{s}}.
\end{equation}
It is easy to see that $g$ is multiplicative, but $\mu\ast{g}$ is not completely multiplicative in this case.

From (\ref{43}) and (\ref{71}), we have
\begin{align*}
  \mathcal{C}^{\ast}_{H_1,\cdots,H_k}&=\frac{(\mu\ast{g})(Q)}{\left|{Q}\right|^{k}}\mathop\sum_{\substack{M\in\mathbb{A}_{+}\\(M,Q)=1}}
 \frac{(\mu\ast{g})(M)}{\left|{M}\right|^{k}}\\
 &=\frac{\mu{(Q)}}{\left|{Q}\right|^{k+s}}\mathop\sum_{\substack{M\in\mathbb{A}_{+}\\(M,Q)=1}}\frac{\mu{(M)}}{\left|{M}\right|^{k+s}}\\
 &=\frac{\mu{(Q)}}{\left|{Q}\right|^{k+s}}\prod_{\substack{P\in\mathbb{P}\\(P,Q)=1}}\left(\mathop\sum_{e=0}^{\infty}
 \frac{\mu{(P^{e})}}{\left|{P}\right|^{e(k+s)}}\right)\;(e=0,1)\\
 &=\frac{\mu{(Q)}}{\left|{Q}\right|^{k+s}}\prod_{\substack{P\in\mathbb{P}\\(P,Q)=1}}\left({1-\frac{1}{\left|{P}\right|^{k+s}}}\right)\\
 &=\frac{\mu{(Q)}}{\left|{Q}\right|^{k+s}}\frac{\prod_{P\in\mathbb{P}}\left(1-\frac{1}{\left|{P}\right|^{k+s}}\right)}
 {\prod_{P|Q}\left(1-\frac{1}{\left|{P}\right|^{k+s}}\right)}\\
 &=\frac{\mu{(Q)}}{\left|{Q}\right|^{(k+s)}\prod_{P|Q}\left({1-\frac{1}{\left|{P}\right|^{k+s}}}\right)}
 \prod_{P\in\mathbb{P}}\left(1-\frac{1}{\left|{P}\right|^{k+s}}\right)\\
 &=\frac{1}{\zeta_{\mathbb{A}}{(k+s)}}\frac{\mu{(Q)}}{\phi_{k+s}{(Q)}}.
\end{align*}
Hence applying (\ref{po.sp.ab*}) to
 \[
 g(G)=\frac{\phi_{s}{(G)}}{\left|{G}\right|^{s}},
 \]
 we get
 \[
 \frac{\phi_{s}{((G_1,\cdots,G_k))}}{\left|(G_1,\cdots,G_k)\right|^{s}}
 =\frac{1}{\zeta_{\mathbb{A}}(k+s)}\mathop\sum_{H_1,\cdots,H_k\in\mathbb{A}_{+}}
 \frac{\mu{(Q)}\eta^\ast(G_1,H_1)\cdots\eta^\ast(G_k,H_k)}{\phi_{k+s}(Q)}.
 \]

Finally, setting $s=1$, we obtain (\ref{phi*}).
 \end{proof}

\begin{corollary}\label{Cor.last}
For any $G_1,\cdots,G_k\in\mathbb{A}_{+}$, we have the following absolutely convergent series
 \begin{equation}
 \frac{\phi_{s}{((G_1,\cdots,G_k))}}{\left|(G_1,\cdots,G_k)\right|^{s}}
 =\frac{1}{\zeta_{\mathbb{A}}(k+s)}\mathop\sum_{H_1,\cdots,H_k\in\mathbb{A}_{+}}
 \frac{\mu{(Q)}\eta(G_1,H_1)\cdots\eta(G_k,H_k)}{\phi_{k+s}(Q)},
 \end{equation}
 where $s\in\mathbb{R}$, $k+s>1$,
 \begin{equation}
 \frac{\phi{((G_1,\cdots,G_k))}}{\left|(G_1,\cdots,G_k)\right|}
 =\frac{1}{\zeta_{\mathbb{A}}(k+1)}\mathop\sum_{H_1,\cdots,H_k\in\mathbb{A}_{+}}
 \frac{\mu{(Q)}\eta(G_1,H_1)\cdots\eta(G_k,H_k)}{\phi_{k+1}(Q)},
 \end{equation}
 where $k\geq{1}$.
 \end{corollary}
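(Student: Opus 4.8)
The plan is to set $g(G)=\phi_{s}(G)/|G|^{s}$, exactly as in the proof of Corollary \ref{coll.special}, and then to compute the coefficient $\mathcal{C}_{H_1,\cdots,H_k}$ attached to the expansion (\ref{po.sp.ab}) through the ordinary Ramanujan sums $\eta(G,H)$. By the same M\"obius inversion as in (\ref{71}), one has $(\mu\ast g)(G)=\mu(G)/|G|^{s}$. The point I would stress at the outset is that, although $g$ is multiplicative, the function $\mu\ast g$ is \emph{not} completely multiplicative, because $\mu$ vanishes on non-squarefree polynomials. Consequently the clean formula (\ref{42}) of Corollary \ref{important corollary}, which presupposes complete multiplicativity, is unavailable, and I would instead compute $\mathcal{C}_{H_1,\cdots,H_k}$ directly from the expression supplied by Theorem \ref{thm:special}.

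Concretely, I would begin from
\[
\mathcal{C}_{H_1,\cdots,H_k}=\frac{1}{|Q|^{k}}\mathop\sum_{M\in\mathbb{A}_{+}}\frac{(\mu\ast g)(MQ)}{|M|^{k}}=\frac{1}{|Q|^{k}}\mathop\sum_{M\in\mathbb{A}_{+}}\frac{\mu(MQ)}{|MQ|^{s}\,|M|^{k}},
\]
with $Q=[H_1,\cdots,H_k]$. The decisive step is to observe that $\mu(MQ)=0$ unless $MQ$ is squarefree, which forces $(M,Q)=1$, and that when $(M,Q)=1$ one has $\mu(MQ)=\mu(M)\mu(Q)$. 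Thus the summation silently restricts to $M$ coprime to $Q$ and factors as
\[
\mathcal{C}_{H_1,\cdots,H_k}=\frac{\mu(Q)}{|Q|^{k+s}}\mathop\sum_{\substack{M\in\mathbb{A}_{+}\\(M,Q)=1}}\frac{\mu(M)}{|M|^{k+s}}.
\]
This is precisely the quantity that was evaluated for $\mathcal{C}^{\ast}_{H_1,\cdots,H_k}$ in Corollary \ref{coll.special}. Reusing that Euler product computation — writing the inner sum as $\prod_{P\nmid Q}(1-|P|^{-(k+s)})$, dividing the full product $\zeta_{\mathbb{A}}(k+s)^{-1}=\prod_{P}(1-|P|^{-(k+s)})$ by the local factors at $P\mid Q$, and recognizing $\phi_{k+s}(Q)=|Q|^{k+s}\prod_{P\mid Q}(1-|P|^{-(k+s)})$ from (\ref{phi}) — I would arrive at $\mathcal{C}_{H_1,\cdots,H_k}=\mu(Q)/\bigl(\zeta_{\mathbb{A}}(k+s)\,\phi_{k+s}(Q)\bigr)$.

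It then remains to invoke the expansion (\ref{po.sp.ab}) of Theorem \ref{thm:special}, whose hypothesis (\ref{value.special}) holds for this $g$ whenever $k+s>1$: since $(\mu\ast g)(G)=\mu(G)/|G|^{s}$ is supported on squarefree $G$, the series in (\ref{value.special}) equals $\prod_{P\in\mathbb{P}}\bigl(1+2^{k}/|P|^{k+s}\bigr)$, which converges exactly when $k+s>1$. Substituting the coefficient computed above yields the first displayed identity, and specializing to $s=1$ gives the second. The only genuinely delicate point is the one flagged above: recognizing that the completely multiplicative shortcut is inapplicable here, and that the honest evaluation of $\mathcal{C}_{H_1,\cdots,H_k}$ nonetheless reproduces the \emph{unitary} coefficient of Corollary \ref{coll.special} — the support of $\mu$ automatically enforcing the coprimality constraint $(M,Q)=1$ that in the unitary case had to be imposed by hand.
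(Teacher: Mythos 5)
Your proposal is correct and follows essentially the same route as the paper: both compute $\mathcal{C}_{H_1,\cdots,H_k}$ directly from (\ref{po.sp.coef}) because $\mu\ast g$ fails to be completely multiplicative, use the vanishing of $\mu(MQ)$ for $(M,Q)\neq 1$ to reduce to the coprime sum, and then reuse the Euler-product evaluation from Corollary \ref{coll.special}. Your explicit check of the convergence hypothesis (\ref{value.special}) is a small welcome addition that the paper leaves implicit.
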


 \begin{proof}
Set
 \[
 g(G)=\frac{\phi_{s}{(G)}}{\left|{G}\right|^{s}}.
 \]
From the proof of Corollary \ref{coll.special}, we have
\begin{equation}\label{61}
 (\mu\ast{g})(G)=\frac{\mu{(G)}}{\left|{G}\right|^{s}}.
\end{equation}
 Since here $\mu\ast{g}$ is not completely multiplicative,
 we can not apply (\ref{42}), but we may use (\ref{po.sp.coef}) to calculate the coefficients.

From (\ref{po.sp.coef}) and (\ref{61}), we have
 \begin{equation*}
 \begin{split}
 \mathcal{C}_{H_1,\cdots,H_k}&=\frac{1}{\left|{Q}\right|^{k}}\mathop\sum_{M\in\mathbb{A}_{+}}
 \frac{(\mu\ast{g})(MQ)}{\left|{M}\right|^{k}}\\
 &=\frac{1}{\left|{Q}\right|^{k+s}}\mathop\sum_{M\in\mathbb{A}_{+}}\frac{\mu{(MQ)}}{\left|{M}\right|^{k+s}}.
 \end{split}
 \end{equation*}
 Note that if $(M,Q)\neq{1}$, then $\mu{(MQ)}=0$. So the above equation becomes to
 \begin{equation*}
 \begin{split}
  \mathcal{C}_{H_1,\cdots,H_k}&=\frac{1}{\left|{Q}\right|^{k+s}}\mathop\sum_{\substack{M\in\mathbb{A}_{+}\\(M,Q)=1}}\frac{\mu{(MQ)}}{\left|{M}\right|^{k+s}}\\
 &=\frac{1}{\left|{Q}\right|^{k+s}}\mathop\sum_{\substack{M\in\mathbb{A}_{+}\\(M,Q)=1}}\frac{\mu{(M)}\mu{(Q)}}{\left|{M}\right|^{k+s}}\\
 &=\frac{\mu{(Q)}}{\left|{Q}\right|^{k+s}}\mathop\sum_{\substack{M\in\mathbb{A}_{+}\\(M,Q)=1}}\frac{\mu{(M)}}{\left|{M}\right|^{k+s}}.
 \end{split}
 \end{equation*}
The remaining part of the proof follows from the same line as that of Corollary \ref{coll.special}.
\end{proof}



\begin{thebibliography}{HD82}


\normalsize
\baselineskip=17pt


\bibitem[A76]{Apostol} T. M. Apostol, \emph{Introduction to analytic number theory}, Undergraduate Texts in Mathematics, Springer-Verlag, New York-Heidelberg, 1976.
\bibitem[C47]{Carlitz}  L. Carlitz, \emph{The singular for sums of squares of polynomials},  Duck Math. J. 14 (1947) 1105--1120.
\bibitem[C49]{Cohen1} E. Cohen, \emph{An extension of Ramanujans sum}, Duck Math. J. 16 (1949) 85--90.
\bibitem[C60]{Cohen2} E. Cohen, \emph{Arithmetic functions associated with the unitary divisors of an integer}, Math. Z. 74 (1960) 66--80.
\bibitem[D76]{Delange} H. Delange, \emph{On Ramanujan expansions of certain arithmetic functions}, Acta Arith. 31 (1976) 259--270.
\bibitem[D10]{Droll} A. Droll, \emph{A classification of Ramanujan unitary Cayley graphs}, Electron. J. Comb. 17(1) (2010) N29.
\bibitem[R02]{Rosen} M. Rosen, \emph{Number Theorem in Function Fields}, GTM, vol. 210. Springer, New York, 2002.
\bibitem[T18]{T1} L. T\'{o}th, \emph{Ramanujan expansions of arithmetic functions of several variables}, Ramanujan J. 47(3) (2018) 589--603.
\bibitem[U16]{Ushiroya} N. Ushiroya, \emph{Ramanujan-Fourier series of certain arithmetic functions of two variables}, Hardy-Ramanujan J. 39 (2016) 1--20.
\bibitem[W17]{Wintner} A. Wintner, \emph{Eratosthenian Averages}, Waverly Press, Baltimore, 2017.
\bibitem[Z18]{Zheng} Z. Zheng, \emph{On the polynomial Ramanujan sums over finite fields}, Ramanujan J. 46(3) (2018) 863--898.

\end{thebibliography}
\end{document}